\newcommand{\seqnum}[1]{\href{http://www.oeis.org/#1}{\underline{#1}}}
\title{Sequences Realized by the Subgroup Pattern of the Symmetric Group}
\author{Liam Naughton}
\newcommand{\Sym}{\mathsf{Sym}}
\newtheorem{prop}{Proposition} 
\newtheorem{Lemma}{Lemma}
\theoremstyle{definition}
\newtheorem{defn}{Definition} 
\newtheorem{remark}{Remark}
\theoremstyle{definition}
\newtheorem{eg}{Example}
\begin{document}

\begin{center}
\vskip 1cm{\LARGE\bf Integer Sequences Realized by the Subgroup Pattern of the Symmetric Group
}
\vskip 1cm
\large
L. Naughton and G. Pfeiffer\\  
School of Mathematics, Applied Mathematics and Statistics\\  
National University of Ireland, Galway\\  
Ireland\\   
\href{mailto:liam.naughton@nuigalway.ie}{\tt } \\
\end{center}

\begin{abstract}
The subgroup pattern of a finite group $G$ is the table of marks of $G$ together with a list of representatives of the conjugacy classes of 
subgroups of $G$. In this article we describe a collection of sequences realized by the subgroup pattern of the symmetric group.
\end{abstract}

\section{Introduction}\label{sec:introduction}
The table of marks of a finite group $G$ was introduced by Burnside \cite{burn}. It is a matrix whose rows and columns are indexed by a list of representatives of 
the conjugacy classes of subgroups of $G$, where, for two subgroups $H,K \leq G$ the $(H,K)$ entry in the table of marks of $G$ is the number of fixed points of 
$K$ in the transitive action of $G$ on the cosets of $H$, ($\beta_{G/H}(K)$).  If $H_{1}, \dots, H_{r}$ is a list of representatives of the conjugacy classes of subgroups of $G$, the 
table of marks is then the $(r \times r)$-matrix 
\[ M(G) = ( \beta_{G/H_{i}}(H_{j}) )_{i,j = 1.,\dots,r}. \]
In much the same fashion as the character table of $G$ classifies matrix representations of $G$ up to isomorphism, the table of marks of $G$ 
classifies permutation representations of $G$ up to equivalence. It also encodes a wealth of information about the subgroup lattice of $G$ in a compact way. 
The \texttt{GAP} \cite{GAP4} library of tables of marks \texttt{Tomlib} \cite{tomlib} provides ready access to the tables of marks and conjugacy classes of subgroups 
of some $400$ groups. These tables have been produced using the methods described in \cite{pfe} and \cite{mcompaper}. The data exhibited in later sections has been computed using this library. The purpose of this article is to illustrate how interesting 
integer sequences related to the subgroup structure of the symmetric group $S_{n}$, and the alternating group $A_{n}$, can be computed from this data. This paper is organized as follows. In Section 
\ref{sec:counting subgroups} we study the conjugacy classes of subgroups of  $S_{n}$ for $n \leq 13$. In Section \ref{sec:Sequences from the Table of Marks} 
we examine the tables of marks of $S_{n}$ for $n \leq 13$ and describe how much more information regarding the subgroup structure of $S_{n}$ can be obtained. In 
Section \ref{sec:conn} we discuss the Euler Transform and its applications in counting subgroups of $S_{n}$.

\section{Counting Subgroups}\label{sec:counting subgroups}
Given a list of representatives $\{ H_{1}, \dots, H_{r} \}$ of $\mathrm{Sub}(G)/G$, the conjugacy classes of subgroups of $G$, we can enumerate those subgroups which satisfy particular properties. The numbers of 
conjugacy classes of subgroups of $S_{n}$ and $A_{n}$ are sequences \seqnum{A000638} and \seqnum{A029726} respectively in Sloane's encyclopedia \cite{intseq}. The \texttt{GAP} table of marks library \texttt{Tomlib}
 provides access to the conjugacy classes of subgroups of the symmetric and alternating groups for $n \leq 13$. Table~\ref{tab:sequences in Sn} records the number 
of conjugacy classes of subgroups of $S_{n}$ which are abelian, cyclic, nilpotent, solvable and supersolvable (SupSol). A similar table for the 
conjugacy classes of subgroups of the alternating groups can be found in \ref{appendix}.

  \begin{center}
\begin{table}[H]
  \arraycolsep12pt
  $\begin{array}{|r|r|r|r|r|r|r|} \hline 
& \text{\seqnum{A000638}} 
& \text{\seqnum{A218909}}
& \text{\seqnum{A000041}}
& \text{\seqnum{A218910}}
& \text{\seqnum{A218911}}
& \text{\seqnum{A218912}}
\\
\hline 
n& \text{$|\mathrm{Sub}(S_{n})/S_{n}|$} 
& \text{Abelian}
& \text{Cyclic}
& \text{Nilpotent}
& \text{Solvable}
& \llap{\text{SupSol}}
\\
\hline
 1  & 1          &     1 &       1  &   1       &       1       &       1\\           
 2  & 2          &     2 &       2  &   2       &       2       &       2\\           
 3  & 4          &     3 &       3  &   3       &       4       &       4\\          
 4  & 11         &     7 &       5  &   8       &       11      &       9\\         
 5  & 19         &     9 &       7  &   10      &       17      &       15\\        
 6  & 56         &    20 &      11  &   25      &       50      &       38\\       
 7  & 96         &    26 &      15  &   32      &       84      &       65\\      
 8  & 296        &    61 &      22  &   127     &       268     &       187\\     
 9  & 554        &    82 &      30  &   156     &       485     &       341\\    
 10 & 1593       &   180 &      42  &   531     &       1418    &       923\\  
 11 & 3094       &   236 &      56  &   648     &       2691    &       1789\\
 12 & 10723      &   594 &      77  &   3727    &       9725    &       6118\\
 13 & 20832      &   762 &     101  &   4221    &       18286   &       11616\\
\hline
  \end{array}$
  
\caption{Sequences in $S_{n}$}\label{tab:sequences in Sn}
\end{table}
\end{center}

\subsection{Subgroup Orders}

\noindent A question of historical interest concerns the orders of subgroups of $S_{n}$. In \cite{cameronsbook} Cameron writes:
\textit{
 The Grand Prix question of the Academie des Sciences, Paris, in 1860 asked 
``How many distinct values can a function of $n$ variables take?'' In other words what are the possible indices of subgroups of $S_{n}$.}
 For $n \leq 13$, Table \ref{sizes} records the numbers of different orders $\mathcal{O}(S_{n}),\mathcal{O}(A_{n}) $ of subgroups of $S_{n}$ and $A_{n}$.  
One might as well also enumerate the number of ``missing'' subgroup orders, that is, the number, $d(S_{n})$, of divisors $d$ such that 
$d \mid |S_{n}|$ but $S_{n}$ has no subgroup of order $d$. Table \ref{missingsizes} records the number of missing subgroup orders of $S_{n}$ and $A_{n}$ for $n \leq 13$.

\begin{minipage}[b]{0.4\linewidth}\centering
\begin{table}[H]
\begin{center}
\begin{tabular}{|l|r|r|}
\hline
 & \seqnum{A218913} & \seqnum{A218914} \\
\hline
$n$ & $\mathcal{O}(S_{n})$ & $\mathcal{O}(A_{n})$\\
\hline
1 & 1 & 1\\
2 & 2 & 1\\
3 & 4 & 2\\
4 & 8 & 5\\
5 & 13 & 9\\
6 & 21 & 15\\
7 & 31 & 22\\
8 & 49 & 38\\
9 & 74 & 59\\
10 & 113 & 89\\
11 & 139 & 115\\
12 & 216 & 180\\
13 & 268 & 226 \\
\hline
\end{tabular}
\caption{Subgroup Orders}
\label{sizes}
\end{center}
\end{table}
\end{minipage}
\hspace{0.2cm}
\begin{minipage}[b]{0.5\linewidth}
\centering
\begin{table}[H]
 \begin{center}
\begin{tabular}{|l|r|r|}
\hline
 & \seqnum{A218915} & \seqnum{A218916}\\
\hline
$n$ & $d(S_{n})$ & $d(A_{n})$\\
\hline
1 & 0 & 0\\
2 & 0 & 0\\
3 & 0 & 0\\
4 & 0 & 1\\
5 & 3 & 3\\
6 & 9 & 9\\
7 & 29 & 26\\
8 & 47 & 46\\
9 & 86 & 81\\
10 & 157 & 151\\
11 & 401 & 365\\
12 & 576 & 540\\
13 & 1316 & 1214\\
\hline
 \end{tabular}
\caption{Missing Subgroup Orders}
\label{missingsizes}
 \end{center}
\end{table}
\end{minipage}
\section{Counting Using the Table of Marks}\label{sec:Sequences from the Table of Marks}
If in addition to a list of conjugacy classes of subgroups of $G$, the table of marks of $G$ is also available, or can be computed, one can say quite a lot about 
the structure of the lattice of subgroups of $G$.
We begin this section by giving some basic information about tables of marks and then go on to describe how we can count incidences and edges in the lattice of 
subgroups.
\subsection{About Tables of Marks}
Let $G$ be a finite group and let $\mathrm{Sub}(G)$ denote the set of subgroups of $G$. By $\mathrm{Sub}(G)/G$ we denote the set of conjugacy classes of subgroups 
of $G$. For $H,K \in \mathrm{Sub}(G)$ let 
\[ \beta_{G/H}(K) = \# \{ Hg \in G/H : (Hg)k = Hg \mbox{ for all } k \in K \} = \# \{ g \in G : K \leq H^g \}/|H| \]
denote the mark of $K$ on $H$. This number depends only on the $G$-conjugacy classes of $H$ and $K$. Note that $\beta_{G/H}(K) \neq 0 \Rightarrow |K| \leq |H|$. 
If $H_{1}, \dots, H_{r}$ is a list of representatives of the conjugacy classes of subgroups of $G$,
 the 
table of marks of $G$ is then the  $(r \times r)$-matrix 
\[ M(G) = ( \beta_{G/H_{i}}(H_{j}) )_{i,j = 1,\dots,r}. \]

If the subgroups in the transversal are listed by increasing group order the table of marks is a lower triangular matrix.
The table of marks $M(S_{4})$ of the symmetric group $S_{4}$ is shown in Figure \ref{toms4}.
\begin{figure}[H]
\begin{center}
\begin{tabular}{l|ccccccccccc}
$S_{4}/1 $&$ 24 $&$  $&$  $&$  $&$  $&$  $&$  $&$  $&$  $&$  $&$  $\\ 
$S_{4}/2 $&$ 12 $&$ 4 $&$  $&$  $&$  $&$  $&$  $&$  $&$  $&$  $&$  $\\ 
$S_{4}/2 $&$ 12 $&$ . $&$ 2 $&$  $&$  $&$  $&$  $&$  $&$  $&$  $&$  $\\ 
$S_{4}/3 $&$ 8 $&$ . $&$ . $&$ 2 $&$  $&$  $&$  $&$  $&$  $&$  $&$  $\\ 
$S_{4}/2^2 $&$ 6 $&$ 6 $&$ . $&$ . $&$ 6 $&$  $&$  $&$  $&$  $&$  $&$  $\\ 
$S_{4}/2^2 $&$ 6 $&$ 2 $&$ 2 $&$ . $&$ . $&$ 2 $&$  $&$  $&$  $&$  $&$  $\\ 
$S_{4}/4 $&$ 6 $&$ 2 $&$ . $&$ . $&$ . $&$ . $&$ 2 $&$   $&$  $&$  $&$  $\\ 
$S_{4}/S_3 $&$ 4 $&$ . $&$ 2 $&$ 1 $&$ . $&$ . $&$ . $&$ 1 $&$   $&$  $&$  $\\ 
$S_{4}/D_8 $&$ 3 $&$ 3 $&$ 1 $&$ . $&$ 3 $&$ 1 $&$ 1 $&$ . $&$ 1 $&$   $&$  $\\ 
$S_{4}/A_4 $&$ 2 $&$ 2 $&$ . $&$ 2 $&$ 2 $&$ . $&$ . $&$ . $&$ . $&$ 2 $&$ $\\ 
$S_{4}/S_4 $&$ 1 $&$ 1 $&$ 1 $&$ 1 $&$ 1 $&$ 1 $&$ 1 $&$ 1 $&$ 1 $&$ 1 $&$ 1 $\\ 
\hline
 &$ 1 $&$ 2 $&$ 2 $&$ 3 $&$ 2^2 $&$ 2^2 $&$ 4 $&$ S_3 $&$ D_8 $&$ A_4 $&$ S_4$
\end{tabular} 
\end{center}
\caption{Table of Marks $M(S_{4})$}
\label{toms4}
\end{figure}
\noindent As a matrix, we can extract a variety of sequences from the table of marks, the most obvious of which is the sum of the entries. The sum of the entries 
of $M(S_{n})$ for $n \leq 13$ is shown in Figure \ref{fig:sum M(G)}. We can also sum the entries on the diagonal to obtain the sequences in Figure \ref{fig:sum diag}.
\bigskip
\begin{minipage}[b]{0.4\linewidth}\centering
\begin{table}[H]
\begin{center}
\begin{tabular}{|l|l|l|}
\hline
 & \seqnum{A218917} & \seqnum{A218918} \\
\hline
$n$ & $S_{n}$ & $A_{n}$\\
\hline
1 & 1 & 1\\
2 & 4 & 1\\
3 & 18 & 5\\
4 & 146 & 39\\
5 & 681 & 192\\
6 & 7518 & 1717\\
7 & 58633 & 13946\\
8 & 952826 & 243391\\
9 & 11168496 & 2693043\\
10 & 232255571 & 38343715\\
11 & 3476965896 & 545787051\\
12 & 108673489373 & 15787210045 \\
13 & 1951392769558 & 268796141406 \\
\hline
\end{tabular}
\end{center}
\caption{Sum of $M(G)$}
\label{fig:sum M(G)}
\end{table}
\end{minipage}
\hspace{0.5cm}
\begin{minipage}[b]{0.5\linewidth}
\centering
\begin{table}[H]
\begin{center}
\begin{tabular}{|l|l|l|}
\hline
 & \seqnum{A218919} & \seqnum{A218920} \\
\hline
$n$ & $S_{n}$ & $A_{n}$\\
\hline
1 & 1 & 1\\
2 & 3 & 1\\
3 & 10 & 4\\
4 & 47 & 19\\
5 & 165 & 73\\
6 & 950 & 412\\
7 & 5632 & 2660\\
8 & 43772 & 21449\\
9 & 376586 & 184541\\
10 & 3717663 & 1827841\\
11 & 40555909 & 20043736\\
12 & 484838080 & 240206213\\
13 & 6286289685 & 3119816216 \\
\hline
\end{tabular}
\end{center}
\caption{Sum of the Diagonal}
\label{fig:sum diag}
\end{table}
\end{minipage}

\noindent We will now collect some elementary properties of tables of marks in Lemma \ref{lem:propertiesoftoms}. 
\begin{Lemma}\label{lem:propertiesoftoms}
\noindent Let $H, K \leq G$. Then the following hold:
\begin{enumerate}
\item[(i)] The first entry of every row of $M(G)$ is the index of the corresponding subgroup,
\[\beta_{G/H} (1) = |G:H|.\]
\item[(ii)] The entry on the diagonal is,
\[ \beta_{G/H}(H) = |N_G (H):H|. \]
\item[(iii)] The length of the conjugacy class $[H]$ of $H$ is given by,
\[ |[H]| = |G:N_G (H)| = \frac{\beta_{G/H} (1) }{\beta_{G/H} (H) }. \]
\item[(iv)] The number of conjugates of $H$ which contain $K$ is given by,
\[ |\lbrace H^a : a \in G, K \leq H^a \rbrace | = \frac{\beta_{G/H} (K)}{\beta_{G/H} (H) }.\]
\end{enumerate}
\end{Lemma}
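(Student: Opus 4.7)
The plan is to handle the four parts in the order given, since each later part builds on the earlier ones. Throughout I would work from the two equivalent definitions of the mark recalled in the paper:
\[ \beta_{G/H}(K) = \#\{Hg \in G/H : (Hg)k = Hg \text{ for all } k \in K\} = \#\{g \in G : K \leq H^g\}/|H|, \]
switching between them as convenient.

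For (i), I would use the fixed-point description: when $K = \{1\}$ the condition $(Hg)\cdot 1 = Hg$ holds for every coset, so the mark counts all of $G/H$ and equals $|G:H|$. For (ii), I would spell out what it means for a coset $Hg$ to be fixed by all of $H$: $Hgh = Hg$ for every $h \in H$ iff $ghg^{-1} \in H$ for all $h \in H$ iff $g \in N_G(H)$. The set of such $g$ is a union of cosets of $H$ inside $N_G(H)$, so it contributes $|N_G(H):H|$ distinct cosets $Hg$, which gives the claim.

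For (iii), the first equality $|[H]| = |G:N_G(H)|$ is just the orbit-stabilizer theorem applied to $G$ acting by conjugation on subgroups. The second equality then follows by substituting (i) and (ii):
\[ \frac{\beta_{G/H}(1)}{\beta_{G/H}(H)} = \frac{|G:H|}{|N_G(H):H|} = |G:N_G(H)|. \]
No real difficulty here, it is just bookkeeping with indices.

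Part (iv) is the one I expect to take the most care. The strategy is to use the second (counting) formulation of $\beta_{G/H}(K)$. Write $\mathcal{H}_K = \{H^a : a \in G,\ K \leq H^a\}$ and note that the map $g \mapsto H^g$ sends the set $\{g \in G : K \leq H^g\}$ onto $\mathcal{H}_K$. Two elements $g, g'$ have the same image iff $g^{-1}g' \in N_G(H)$, so each fibre has size $|N_G(H)|$. Hence
\[ |\mathcal{H}_K| = \frac{|\{g \in G : K \leq H^g\}|}{|N_G(H)|} = \frac{\beta_{G/H}(K)\,|H|}{|N_G(H)|} = \frac{\beta_{G/H}(K)}{|N_G(H):H|}, \]
and applying (ii) rewrites the denominator as $\beta_{G/H}(H)$, giving the stated formula. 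The main obstacle, such as it is, is keeping the two definitions of the mark straight and being careful that the fibres of $g \mapsto H^g$ really do have size $|N_G(H)|$; once that is pinned down, the rest is a direct calculation.
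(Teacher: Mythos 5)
Your proof is correct, and since the paper states Lemma~\ref{lem:propertiesoftoms} without proof as a collection of elementary facts, your arguments are exactly the standard ones it has in mind: the fixed-coset count for (i) and (ii), orbit--stabilizer combined with (i) and (ii) for (iii), and fibre-counting of the map $g \mapsto H^{g}$ for (iv). One small nitpick in (iv): with the convention $H^{g} = g^{-1}Hg$ forced by the paper's fixed-point definition, two elements $g, g'$ yield the same conjugate iff $g'g^{-1} \in N_G(H)$ (equivalently $g^{-1}g' \in N_G(H^{g})$), not $g^{-1}g' \in N_G(H)$; the fibres are the right cosets $N_G(H)g$, which still have size $|N_G(H)|$, so your count and conclusion stand.
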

\noindent The following formula which follows trivially from Lemma \ref{lem:propertiesoftoms} (iv) relates marks to incidences in the subgroup lattice of $G$.
\begin{align}\label{marksasincidence}
 \beta_{G/H}(K) = |N_{G}(H):H| \cdot \# \{H^g : K \leq H^g, g \in G \}.
\end{align}

\noindent As a first application of Formula \ref{marksasincidence} we obtain the following lemma which enables us to count the total number of subgroups of $G$.
\begin{Lemma}\label{lem:totalsub}
Given a list $\{ H_{1}, \dots, H_{r} \}$ of representatives of the conjugacy classes of subgroups of $G$, the total number of subgroups of $G$ is 
\[ |\mathrm{Sub}(G)| = \sum \limits_{i = 1}^{r} \frac{\beta_{G/H_{i}}(1)}{\beta_{G/H_{i}}(H_{i}) }. \]
\end{Lemma}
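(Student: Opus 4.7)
The plan is to exploit the fact that conjugation partitions $\mathrm{Sub}(G)$ into disjoint conjugacy classes, so $|\mathrm{Sub}(G)|$ is simply the sum of the class sizes over a complete set of representatives. Thus if I can evaluate $|[H_i]|$ for each representative in terms of entries of $M(G)$, the lemma follows by summation.

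First I would recall that the conjugation action of $G$ on $\mathrm{Sub}(G)$ has orbits exactly the conjugacy classes $[H_1], \dots, [H_r]$, so
\[
|\mathrm{Sub}(G)| = \sum_{i=1}^{r} |[H_i]|.
\]
Next I would invoke part (iii) of Lemma \ref{lem:propertiesoftoms}, which was already established, to substitute
\[
|[H_i]| = \frac{\beta_{G/H_i}(1)}{\beta_{G/H_i}(H_i)}
\]
into the sum. This yields the claimed identity immediately.

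There is really no obstacle here: the only subtlety is ensuring that the orbit-stabilizer identity expressing $|[H_i]|$ as $|G : N_G(H_i)|$ really does match the ratio of the first column entry to the diagonal entry in row $i$ of $M(G)$, and that is precisely what Lemma \ref{lem:propertiesoftoms} (i)--(iii) guarantees. So the proof is a one-line application of the already-proven formula combined with the orbit decomposition of $\mathrm{Sub}(G)$ under conjugation.
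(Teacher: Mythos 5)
Your proof is correct and follows essentially the same route as the paper: both identify the ratio $\beta_{G/H_i}(1)/\beta_{G/H_i}(H_i)$ as the length of the conjugacy class of $H_i$ (you via Lemma \ref{lem:propertiesoftoms}(iii), the paper via Formula \ref{marksasincidence}) and then sum over the classes. No issues.
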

\begin{proof}
It follows from Formula \ref{marksasincidence} that for any subgroup $H \leq G$, $\frac{\beta_{G/H}(1)}{\beta_{G/H}(H)}$ is the length of the conjugacy class of 
$H$ in $G$.
\end{proof}
\noindent Table \ref{totalnumberofsubssymalt} lists the total number of subgroups of $S_{n}$ and $A_{n}$ for $n \leq 13$.

\begin{table}[H]

\begin{center}
\begin{tabular}{|l|l|l|}
\hline
&\seqnum{A029725} & \seqnum{A005432} \\
\hline
$n$ & $A_{n}$ & $S_{n}$\\
\hline
1 & 1 & 1\\
2 & 1 & 2\\
3 & 2 & 6\\
4 & 10 & 30\\
5 & 59 & 156\\
6 & 501 & 1455\\
7 & 3786 & 11300\\
8 & 48337 & 151221\\
9 & 508402 & 1694723\\
10 & 6469142 & 29594446\\
11 & 81711572 & 404126228\\
12 & 2019160542 & 10594925360\\
13 & 31945830446 & 175238308453\\
\hline
\end{tabular}
\end{center}
\caption{Total Number of Subgroups of $A_{n}$ and $S_{n}$}
\label{totalnumberofsubssymalt}
\end{table}

\subsection{Counting Incidences}
Another immediate consequence of Formula \ref{marksasincidence} is that by dividing each row of the table of marks of $G$ by its diagonal entry $\beta_{G/H}(H)$  we obtain a matrix 
 $\mathcal{C}(G)$ describing 
containments in the subgroup lattice of $G$, where the $(H,K)$-entry is 
\begin{align}
\mathcal{C}(H,K) = \# \{ K^{g} : H \leq K^{g}, g \in G \}.
\end{align}
Figure \ref{fig:subgroup containments} illustrates the containment matrix of the symmetric group $S_{4}$. 
\begin{center}
\begin{figure}[H]
\[ 
\begin{array}{c|ccccccccccc}
1 & 1 &  &  &  &  &  &  &  &  &  &  \\ 
2 & 3 & 1 &  &  &  &  &  &  &  &  &  \\ 
2 & 6 & . & 1 &  &  &  &  &  &  &  &  \\ 
3 & 4 & . & . & 1 &  &  &  &  &  &  &  \\ 
2^2 & 1 & 1 & . & . & 1 &  &  &  &  &  &  \\ 
2^2 & 3 & 1 & 1 & . & . & 1 &  &  &  &  &  \\ 
4 & 3 & 1 & . & . & . & . & 1 &   &  &  &  \\ 
S_3 & 4 & . & 2 & 1 & . & . & . & 1 &   &  &  \\ 
D_8 & 3 & 3 & 1 & . & 3 & 1 & 1 & . & 1 &   &  \\ 
A_4 & 1 & 1 & . & 1 & 1 & . & . & . & . & 1 & \\ 
S_4 & 1 & 1 & 1 & 1 & 1 & 1 & 1 & 1 & 1 & 1 & 1 \\ 
\hline
 & 1 & 2 & 2 & 3 & 2^2 & 2^2 & 4 & S_3 & D_8 & A_4 & S_4
\end{array}  \]
\caption{Containment Matrix : $\mathcal{C}(S_{4})$}
\label{fig:subgroup containments}
\end{figure}
\end{center}
The conjugacy classes of subgroups of $G$ are partially ordered by $[H] \leq [K]$ if $H \leq K^g$ for some $g \in G$ i.e. if $\mathcal{C}(H, K) \neq 0$.
Therefore we can easily obtain the incidence matrix, $\mathcal{I}(G)$, of the poset of conjugacy classes of subgroups 
of $G$ by replacing each nonzero entry in $\mathcal{C}(G)$, (or $M(G)$ ) by an entry $1$.  
Figure \ref{fig:incidence matrix S_(4)} shows the incidence matrix $\mathcal{I}(S_{4})$ of the poset of conjugacy classes of subgroups of $S_{4}$.
\begin{center}
\begin{figure}[H]
\[ 
\begin{array}{c|ccccccccccc}
1 & 1 &  &  &  &  &  &  &  &  &  &  \\ 
2 & 1 & 1 &  &  &  &  &  &  &  &  &  \\ 
2 & 1 & . & 1 &  &  &  &  &  &  &  &  \\ 
3 & 1 & . & . & 1 &  &  &  &  &  &  &  \\ 
2^2 & 1 & 1 & . & . & 1 &  &  &  &  &  &  \\ 
2^2 & 1 & 1 & 1 & . & . & 1 &  &  &  &  &  \\ 
4 & 1 & 1 & . & . & . & . & 1 &   &  &  &  \\ 
S_3 & 1 & . & 1 & 1 & . & . & . & 1 &   &  &  \\ 
D_8 & 1 & 1 & 1 & . & 1 & 1 & 1 & . & 1 &   &  \\ 
A_4 & 1 & 1 & . & 1 & 1 & . & . & . & . & 1 & \\ 
S_4 & 1 & 1 & 1 & 1 & 1 & 1 & 1 & 1 & 1 & 1 & 1 \\ 
\hline
 & 1 & 2 & 2 & 3 & 2^2 & 2^2 & 4 & S_3 & D_8 & A_4 & S_4
\end{array}  \]
\caption{Incidence Matrix : $\mathcal{I}(S_{4})$}
\label{fig:incidence matrix S_(4)}
\end{figure}
\end{center}
For comparison with Figure \ref{fig:incidence matrix S_(4)} we illustrate the 
poset of conjugacy classes of subgroups of $S_{4}$ in Figure \ref{lattices4}.
 
 \begin{figure}[H]
\begin{center}
\scalebox{0.8}{
\begin{tikzpicture}

 \node (1) at (10,0)[black]   {$1$};
\node (b2) at (8,2)[black]   {$2$};

\node (r2) at (6,2)[black]  {$2$};
\node (b3) at (12, 3)[black]   {$3$};
\node (b22) at (8, 4)[black]   {$2^2$};
\node (r22) at (4, 4)[black]  {$2^2$};
\node (r4) at (6, 4)[black]  {$4$};
\node (s3) at (13, 5)[black]  {$S_{3}$};
\node (d8) at (6, 6) [black] {$D_{8}$};
\node (a4) at (10, 8) [black] {$A_{4}$};
\node (s4) at (10, 10)[black]  {$S_{4}$};

\path
(1) edge[thick] (b2)
(1) edge[thick]  (b3)
(b2) edge[thick]  (b22)
(b3) edge[thick]  (a4)
(b22) edge[thick]  (a4)
(1) edge[thick] (r2)
(b2) edge[thick] (r22)
(b2) edge[thick] (r4)
(b22) edge[thick] (d8)
(a4) edge[thick] (s4)
(b3) edge[thick] (s3)
(r2) edge[thick] (s3)
(r2) edge[thick]  (r22)
(r4) edge[thick]  (d8)
(r22) edge[thick]  (d8)
(d8) edge[thick]  (s4)
(s3) edge[thick]  (s4);

\end{tikzpicture}}
\end{center}
\caption{Poset of Conjugacy Classes of Subgroups of $S_{4}$}
\label{lattices4}
 \end{figure}
\begin{Lemma}
 The number of incidences in the poset of conjugacy classes of subgroups of $G$ is given by 
\[  \sum\mathcal{I}(G). \]
\end{Lemma}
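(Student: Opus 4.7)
The statement is essentially a reformulation of how $\mathcal{I}(G)$ was constructed, so my plan is to unwind the definitions carefully and check that the matrix is a faithful indicator of the incidence relation on $\Sub(G)/G$. The main point to make precise is that the rows and columns of $\mathcal{I}(G)$ are indexed by a transversal $\{H_1,\dots,H_r\}$ of $\Sub(G)/G$, so each ordered pair of conjugacy classes $([H_i],[H_j])$ corresponds to exactly one entry of the matrix, with no duplication and no omission.

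First I would recall that, by construction, $\mathcal{I}(G)$ is obtained from $\mathcal{C}(G)$ (equivalently, from $M(G)$) by replacing every nonzero entry by $1$ and leaving the zero entries alone. So $\sum \mathcal{I}(G)$ equals the number of nonzero entries in $\mathcal{C}(G)$. Next I would use Formula \ref{marksasincidence} together with Lemma \ref{lem:propertiesoftoms}(iv) to observe that
\[
\mathcal{C}(H_i, H_j) \,=\, \#\{H_i^g : H_j \leq H_i^g, \ g \in G\},
\]
which is nonzero if and only if some conjugate of $H_i$ contains $H_j$, i.e.\ if and only if $[H_j] \leq [H_i]$ in the partial order on $\Sub(G)/G$ defined immediately before the lemma.

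Combining these two observations, the entry in position $(H_i, H_j)$ of $\mathcal{I}(G)$ is $1$ precisely when $[H_j] \leq [H_i]$ and $0$ otherwise. Summing all entries therefore counts exactly the pairs $(i,j) \in \{1,\dots,r\}^2$ with $[H_j] \leq [H_i]$, which is by definition the number of incidences (comparable ordered pairs) in the poset of conjugacy classes of subgroups.

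The only conceptual thing to verify — and what I would flag as the ``hard'' part, though it is really only a check — is that the relation ``$H \leq K^g$ for some $g$'' descends to a well-defined relation on conjugacy classes, independent of the chosen representatives $H_i, H_j$; this was essentially noted when $\beta_{G/H}(K)$ was defined (the mark depends only on the $G$-classes of $H$ and $K$), so the enumeration of incidences does not overcount or undercount any comparable pair of classes.
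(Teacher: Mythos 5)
Your proof is correct and follows essentially the same route as the paper: unwind the construction of $\mathcal{I}(G)$ from $\mathcal{C}(G)$ (or $M(G)$) and use Formula \ref{marksasincidence} to identify a nonzero entry in position $(H_i,H_j)$ with the subconjugacy relation $[H_j] \leq [H_i]$, so that summing the entries counts comparable pairs of classes. Your extra remark on well-definedness of the relation on conjugacy classes is a harmless elaboration of what the paper already notes when defining marks.
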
 
\begin{proof}
 The incidence matrix $\mathcal{I}(G)$ is obtained by replacing every nonzero entry in the table of marks by an entry $1$. By Formula \ref{marksasincidence} 
$\mathcal{I}(H,K) = 1$ if and only if $K$ is subconjugate to $H$ in $G$, i.e. if and only if $H$ and $K$ are incident in the poset of conjugacy classes of subgroups 
of $G$.
\end{proof}
Table \ref{fig:incidences in poset} lists the number of incidences in the poset of conjugacy classes of subgroups of $A_{n}$ and $S_{n}$ for $n \leq 13$.
\begin{Lemma}
 The total number of incidences in the entire subgroup lattice of $G$ is given by 
\[ \sum \mathcal{C}(G). \]
\end{Lemma}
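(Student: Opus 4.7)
The plan is to unpack each entry of the containment matrix as a count of conjugate subgroups via Formula (\ref{marksasincidence}). By that formula, equivalently by Lemma \ref{lem:propertiesoftoms}(iv), the $(i,j)$-entry is
\[
\mathcal{C}(H_i,H_j) \;=\; \#\{H_i^g : H_j \leq H_i^g,\ g \in G\},
\]
i.e.\ the number of $G$-conjugates of $H_i$ containing the representative $H_j$. This is the only ingredient I would import from the earlier material.

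The argument is then a single double count. For a fixed representative $H_j$, summing over $i$ exploits the partition $\mathrm{Sub}(G) = \bigsqcup_{i=1}^{r}[H_i]$: as $i$ varies, the conjugates of $H_i$ enumerated in $\mathcal{C}(H_i,H_j)$ pick out each subgroup $L \leq G$ with $H_j \leq L$ exactly once, namely when $L$ appears as a conjugate of the unique $H_i$ representing its class. Thus
\[
\sum_{i=1}^{r} \mathcal{C}(H_i,H_j) \;=\; \#\{L \leq G : H_j \leq L\},
\]
and summing this identity over $j$ gives
\[
\sum \mathcal{C}(G) \;=\; \sum_{j=1}^{r} \#\{L \leq G : H_j \leq L\},
\]
which enumerates the incidences $H_j \leq L$ in the subgroup lattice, with $H_j$ running over the chosen transversal of conjugacy classes.

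The only mildly delicate point, and the one I would write out most carefully, is checking that Lemma \ref{lem:propertiesoftoms}(iv) is applied in the correct direction, so that the running index $i$ corresponds to conjugacy classes of the subgroup whose conjugates are being counted; otherwise the partition of $\mathrm{Sub}(G)$ is being applied on the wrong side and the count breaks. Once the orientation is pinned down, the disjoint union $\mathrm{Sub}(G) = \bigsqcup_i [H_i]$ does all the work, and the proof reduces to the two displays above with no further machinery required.
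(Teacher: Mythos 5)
Your proof is correct and follows essentially the same route as the paper: interpret each entry of $\mathcal{C}(G)$ as a containment count via Formula (1) / Lemma \ref{lem:propertiesoftoms}(iv) and sum over all entries. Your double count is in fact more explicit than the paper's one-line argument, since it pins down precisely what the total enumerates — namely $\sum_{j}\#\{L\leq G : H_j\leq L\}$ with the smaller subgroup running over the chosen transversal of classes (not over all subgroups of $G$) — and your worry about the orientation of Lemma \ref{lem:propertiesoftoms}(iv) is harmless for the grand total, which is invariant under transposing the matrix.
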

\begin{proof}
 For $H,K \in \mathrm{Sub}(G)/G$ the $H,K$ entry in $\mathcal{C}(G)$ is the number of incidences between $H,K$ in the subgroup lattice of $G$. Thus summing 
over the entries in $\mathcal{C}(G)$ yields the total number of incidences in the entire subgroup lattice of $G$.
\end{proof}
\noindent Table \ref{fig : incidences in subgroup lattice} records the number of incidences in the subgroup lattices of $S_{n}$ and $A_{n}$ for $n \leq 13$.

\begin{table}[H]
\begin{minipage}[t]{0.4\linewidth}\centering
\begin{table}[H]
\begin{center}
\begin{tabular}{|l|l|l|}
\hline
 & \seqnum{A218921} & \seqnum{A218922} \\
\hline
$n$ & $S_{n}$  &$A_{n}$\\ \hline
1 & 1 & 1\\
2 & 3 & 1\\
3 & 9 & 3\\
4 & 44 & 13\\
5 & 101 & 32\\
6 &  523 & 128 \\
7 & 1195  & 330\\
8 & 6751  & 2309 \\
9 & 16986  & 4271 \\
10 & 87884  & 12468 \\
11 & 248635  & 33329 \\
12 & 1709781  & 196182 \\
13 & 4665651  & 490137  \\
\hline
\end{tabular}
\end{center}
\caption{Incidences in Poset}
\label{fig:incidences in poset}
\end{table}
\end{minipage}
\hspace{0.5cm}
\begin{minipage}[t]{0.5\linewidth}
\centering
\begin{table}[H]
\begin{center}
\begin{tabular}{|l|l|l|}
\hline
 & \seqnum{A218924} & \seqnum{A218923} \\
\hline
$n$ & $A_{n}$ & $S_{n}$\\ 
\hline
1 & 1 & 1\\
2 & 1 & 3\\
3 & 3 & 11\\
4 & 18 & 68\\
5 & 85 & 262\\
6 & 657 & 2261\\
7 & 4374 & 14032\\
8 & 55711 & 176245\\
9 & 530502 & 1821103\\
10 & 6603007 & 30883491\\
11 & 82736601 & 415843982\\
12 & 2032940127 & 10779423937\\
13 & 32102236563 & 177718085432\\
\hline
\end{tabular}
\end{center}
\caption{Incidences in Subgroup Lattice}
\label{fig : incidences in subgroup lattice}
\end{table}
\end{minipage}
\end{table}

\subsection{Counting Edges in Hasse Diagrams}
The table of marks also allows us to count the number of edges in both the Hasse diagrams of the poset of conjugacy classes of subgroups and the subgroup lattice of $G$. Computing 
such data requires careful analysis of maximal subgroups in the subgroup lattice.

Formula \ref{marksasincidence} describes containments in the poset of conjugacy classes of subgroups looking upward through the subgroup lattice of $G$. But we can 
also view marks as containments looking downward through the subgroup lattice of $G$.
\begin{Lemma}\label{lemma:edgesupward}
 Let $H, K \in \mathrm{Sub}(G)/G$. Then the number of conjugates of $H$ contained in $K$ is given by 
\[  E^{\uparrow} (H,K) = |\{ H^{g}, g \in G : H^{g} \leq K \}| = \frac{\beta_{G/K}(H) \beta_{G/H}(1)}{\beta_{G/H}(H) \beta_{G/K}(1)} \]
\end{Lemma}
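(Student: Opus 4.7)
The plan is to use a double-counting argument on the set of pairs
\[
S = \{(H^g, K^{g'}) : g, g' \in G,\ H^g \leq K^{g'}\},
\]
i.e.\ all incident pairs of conjugates of $H$ with conjugates of $K$ in the subgroup lattice. The key observation is that the two quantities involved in the claim are dual: Lemma~\ref{lem:propertiesoftoms}(iv), together with Formula~\ref{marksasincidence}, gives information about \emph{conjugates of $H$ containing $K$}, whereas $E^{\uparrow}(H,K)$ counts \emph{conjugates of $H$ contained in $K$}. Counting $|S|$ two different ways will let me convert the first type of data into the second.

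First I would fix a conjugate $K^{g'}$ of $K$ and count the number of conjugates of $H$ it contains. By conjugating, this count is independent of $g'$ and equals $E^{\uparrow}(H,K)$. Since there are $|G:N_G(K)|$ conjugates of $K$ in total, this gives the count
\[
|S| = |G:N_G(K)| \cdot E^{\uparrow}(H,K).
\]
Next I would fix a conjugate $H^g$ of $H$ and count the number of conjugates of $K$ that contain it. Again by conjugation this is independent of $g$ and, by Lemma~\ref{lem:propertiesoftoms}(iv), equals $\beta_{G/K}(H)/\beta_{G/K}(K)$. Since there are $|G:N_G(H)|$ conjugates of $H$, this yields
\[
|S| = |G:N_G(H)| \cdot \frac{\beta_{G/K}(H)}{\beta_{G/K}(K)}.
\]

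Equating the two expressions and solving for $E^{\uparrow}(H,K)$, I obtain
\[
E^{\uparrow}(H,K) = \frac{|G:N_G(H)|}{|G:N_G(K)|} \cdot \frac{\beta_{G/K}(H)}{\beta_{G/K}(K)}.
\]
Now I substitute the expressions from Lemma~\ref{lem:propertiesoftoms}(iii), namely $|G:N_G(H)| = \beta_{G/H}(1)/\beta_{G/H}(H)$ and $|G:N_G(K)| = \beta_{G/K}(1)/\beta_{G/K}(K)$. The factor of $\beta_{G/K}(K)$ cancels against its reciprocal and the desired formula
\[
E^{\uparrow}(H,K) = \frac{\beta_{G/K}(H)\,\beta_{G/H}(1)}{\beta_{G/H}(H)\,\beta_{G/K}(1)}
\]
drops out.

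The only delicate point, and the step I would spell out most carefully, is the invariance-under-conjugation argument that makes the two partial counts well-defined: one has to check that for any $g' \in G$ the map $H^g \mapsto (H^g)^{g'^{-1}}$ is a bijection between conjugates of $H$ contained in $K^{g'}$ and conjugates of $H$ contained in $K$, and symmetrically for the other count. Once this is verified, the rest is a short algebraic manipulation using already-established parts of Lemma~\ref{lem:propertiesoftoms}.
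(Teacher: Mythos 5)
Your proof is correct and is essentially the paper's own argument: a double count of the incident pairs between the conjugacy classes $[H]$ and $[K]$, using Lemma \ref{lem:propertiesoftoms}(iv) for the count of $K$-conjugates above a fixed conjugate of $H$, and Lemma \ref{lem:propertiesoftoms}(iii) (equivalently Formula \ref{marksasincidence}) to express the class lengths $|G:N_G(H)|$ and $|G:N_G(K)|$ in terms of marks. Your pairing of each class length with the per-member count for the \emph{other} class is the right one (the displayed equation in the paper's proof has the two class lengths interchanged, evidently a typo), and your explicit conjugation-invariance check makes the write-up slightly more careful than the original.
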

\begin{proof}
 The total number of edges between the classes $[H]_{G}$ and $[K]_{G}$ can be counted in two different ways, as the length of the class times the 
number of edges leaving one member of the class. Thus
\[ |[H_{G}| \cdot |\{ H^{g} , g \in G : H^{g} \leq K \}| = |[K]_{G}| \cdot |\{ K^{g} , g \in G : K^{g} \geq H \}|. \]
By Formula \ref{marksasincidence} $|[H]_{G}| = \frac{\beta_{G/H}(1)}{\beta_{G/H}(H)}$ and $|[K]_{G}| = \frac{\beta_{G/K}(1)}{\beta_{G/K}(K)}$. 
Thus $E^{\uparrow}(H,K)$ can be expressed in terms of marks.
\end{proof}

\subsubsection{Identifying Maximal Subgroups}
It will be necessary, for the sections that follow, to identify for $H_{i} \in \mathrm{Sub}(G)/G$ which classes $H_{j} \in \mathrm{Sub}(G)/G$ are maximal in $H_{i}$. 
\begin{Lemma}\label{lem:max}
Let $H_{i} \in \mathrm{Sub}(G)/G = \{ H_{1}, \ldots, H_{r} \}$. Denote by $\rho_{i} = \{ j : H_{j} <_{G} H_{i} \}$ the set of indices in $\{1, \ldots , r \}$ of proper subgroups of $H_{i}$ up to 
conjugacy in $G$. Then the positions of all maximal subgroups of $H_{i}$ are given by
\begin{align}\label{form:max}
 \mathrm{Max}(H_{i}) = \rho_{i} \setminus \bigcup_{j \in \rho_{i}} \rho_{j} 
\end{align}
\end{Lemma}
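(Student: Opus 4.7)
My plan is to prove the lemma by unwinding the definitions; the claim is essentially tautological once ``maximal in $H_i$'' is interpreted at the level of the poset of conjugacy classes. Throughout, I would work with the partial order on $\mathrm{Sub}(G)/G$ with $[H] \leq_G [K]$ iff $H \leq K^g$ for some $g \in G$, and say that $[H_k]$ is \emph{maximal in $H_i$} when $[H_k] <_G [H_i]$ strictly and no class $[H_\ell]$ satisfies $[H_k] <_G [H_\ell] <_G [H_i]$, i.e.\ $[H_k]$ is covered by $[H_i]$ in the poset.

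The next step is to translate the right-hand side of (\ref{form:max}) into the same language. By the definition of $\rho_i$, an index $k$ lies in $\rho_i$ precisely when $H_k <_G H_i$, while $k$ lies in $\bigcup_{j \in \rho_i}\rho_j$ precisely when some $j$ satisfies both $H_j <_G H_i$ and $H_k <_G H_j$; that is, precisely when some class $[H_j]$ sits strictly between $[H_k]$ and $[H_i]$. Hence $k \in \rho_i \setminus \bigcup_{j \in \rho_i}\rho_j$ iff $H_k <_G H_i$ and no intermediate class exists, which is exactly the covering condition, so (\ref{form:max}) follows.

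To tie this poset-level statement to the ordinary group-theoretic notion of a maximal subgroup, I would observe that if $[H_k]$ is covered by $[H_i]$ then every $G$-conjugate of $H_k$ lying inside $H_i$ is in fact a maximal subgroup of $H_i$: a proper chain $H_k^g < M < H_i$ would force $|H_k| < |M| < |H_i|$, placing $[M]$ strictly between $[H_k]$ and $[H_i]$ and contradicting the covering. Thus (\ref{form:max}) correctly records those classes whose representatives arise as maximal subgroups of $H_i$, which is all that will be needed in the edge-counting lemmas to follow. The argument requires no machinery beyond this order remark, so there is no real obstacle; the only subtlety worth flagging is the distinction between the poset-theoretic and the subgroup-theoretic meanings of ``maximal'', which the order count reconciles.
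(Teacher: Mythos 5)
The paper states Lemma~\ref{lem:max} without proof (it passes directly to the remark about reading $\rho_i$ off the table of marks and the \texttt{GAP} implementation), so there is no argument of the paper to compare yours against; your proposal has to stand on its own. The purely poset-theoretic part of what you write is fine and essentially tautological, as you say: $k \in \rho_i \setminus \bigcup_{j\in\rho_i}\rho_j$ if and only if $[H_k]$ is covered by $[H_i]$ in the poset $\mathrm{Sub}(G)/G$. Your third paragraph also correctly proves one half of the bridge to honest group-theoretic maximality: if $[H_k]$ is covered by $[H_i]$, then every $G$-conjugate of $H_k$ lying inside $H_i$ is a maximal subgroup of $H_i$ (the order argument you give is correct). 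So every index produced by the right-hand side of (\ref{form:max}) does correspond to a class containing maximal subgroups of $H_i$.

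The gap is the converse inclusion, which you assert (``Thus (\ref{form:max}) correctly records those classes whose representatives arise as maximal subgroups of $H_i$'') but never prove: if some $G$-conjugate of $H_k$ is a maximal subgroup of $H_i$, why is $k$ not removed by $\bigcup_{j\in\rho_i}\rho_j$? Removal happens precisely when there is a class $[H_\ell]$ with $[H_k] <_G [H_\ell] <_G [H_i]$, and unwinding this only yields \emph{some} conjugates with $H_k^{a} < H_\ell^{b} < H_i$; that is not in conflict with the \emph{particular} conjugate you assumed maximal, because the conjugating elements need not normalize $H_i$. In other words, the nontrivial content of the lemma, read group-theoretically, is exactly the statement that a single $G$-class cannot contribute both a maximal and a non-maximally embedded subgroup of $H_i$ (equivalently: for $A,B \le H_i$ that are $G$-conjugate, $A$ is maximal in $H_i$ if and only if $B$ is), and your proposal neither proves this nor flags it as the point at issue. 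This is not a pedantic distinction: the later count of edges in the Hasse diagram of the full subgroup lattice sums $E^{\uparrow}$ over $j \in \mathrm{Max}(H_i)$, so a class that contains maximal subgroups of $H_i$ but is discarded by (\ref{form:max}) would silently be dropped from that count. Either supply an argument for this converse direction (or for the equivalent invariance of maximality under $G$-conjugacy within $H_i$), or state explicitly that $\mathrm{Max}(H_i)$ is being \emph{defined} as the set of classes covered by $[H_i]$ in the poset of conjugacy classes, and then re-examine where the edge-counting lemmas need the stronger, subgroup-level interpretation.
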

The set of values $\rho_{i}$ are easily read off the table of marks of $G$ by simply identifying the nonzero entries in the row corresponding to $G/H_{i}$. 
Formula \ref{form:max} is implemented in \texttt{GAP} via the function \texttt{MaximalSubgroupsTom}.


\begin{Lemma}
 Let $\mathrm{Sub}(G)/G = \{ H_{1}, \ldots, H_{r} \}$ be a list of representatives of the conjugacy classes of subgroups of $G$. The number of edges in the Hasse diagram of the poset of conjugacy classes of subgroups of $G$ is given by 
\[ |E(\mathrm{Sub}(G)/G)| = \sum \limits_{i = 1}^{r} | \mathrm{Max}(H_{i}) |. \]
\end{Lemma}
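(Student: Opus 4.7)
The plan is to reduce the claim to a direct counting argument using Lemma \ref{lem:max}. By definition, an edge in the Hasse diagram of the poset $\mathrm{Sub}(G)/G$ corresponds to a covering relation $[H_j] \lessdot [H_i]$, that is, a pair with $[H_j] <_G [H_i]$ and no class $[H_k]$ strictly between. I would begin by formalizing this: each edge has a unique ``upper endpoint'' class $[H_i]$, so the total number of edges equals $\sum_{i=1}^{r} c_i$, where $c_i$ is the number of classes covered by $[H_i]$ in the poset.

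Next, I would identify $c_i$ with $|\mathrm{Max}(H_i)|$. For a fixed $i$, the set $\rho_i$ consists of all indices $j$ with $[H_j] <_G [H_i]$. An index $j \in \rho_i$ corresponds to a cover $[H_j] \lessdot [H_i]$ precisely when no $k \in \rho_i$ satisfies $[H_j] <_G [H_k]$, i.e., when $j \notin \rho_k$ for every $k \in \rho_i$. This is exactly the condition $j \in \rho_i \setminus \bigcup_{k \in \rho_i} \rho_k$, which is the set $\mathrm{Max}(H_i)$ of Lemma \ref{lem:max}. Therefore $c_i = |\mathrm{Max}(H_i)|$, and summing over $i$ yields the claimed formula.

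The main (minor) obstacle is the bookkeeping at the boundary: one must check that the definition of $\rho_i$ excludes $i$ itself (so $H_i$ is not counted as one of its own maximal subgroups), and that transitivity of the relation $\leq_G$ on classes is used correctly when concluding that $j \in \rho_k$ and $k \in \rho_i$ force $j \in \rho_i$. Both points are immediate from the definitions, so after they are stated the proof is essentially a one-line consequence of Lemma \ref{lem:max} together with the observation that distinct classes $[H_i]$ give disjoint contributions to the edge count of the Hasse diagram.
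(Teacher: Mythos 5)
Your proposal is correct and follows essentially the same route as the paper: edges of the Hasse diagram are counted by their upper endpoints, and the classes covered by $[H_i]$ are exactly those indexed by $\mathrm{Max}(H_i)$ as given by Lemma~\ref{lem:max}. Your write-up is simply a more explicit version (covering relations, disjointness of contributions, transitivity of $\leq_G$) of the paper's brief argument that each edge corresponds to a maximal subgroup class.
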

\begin{proof}
 By Lemma \ref{lem:max}, $\mathrm{Max}(H_{i})$ is a list of the positions of the maximal subgroups of $H_{i}$ up to conjugacy in $G$. In the Hasse diagram of the poset 
$\mathrm{Sub}(G)/G$ each edge corresponds to a maximal subgroup.
\end{proof}
 Table \ref{fig:edges in poset} 
records the number of edges in the Hasse diagram of the poset of conjugacy classes of subgroups of $S_{n}$ and $A_{n}$ for $n \leq 13$.
In order to count the number of edges in the Hasse diagram of the entire subgroup lattice of $G$ we appeal to Formula \ref{marksasincidence} and Lemma \ref{lemma:edgesupward}.
\begin{Lemma}
 Let $\mathrm{Sub}(G)/G = \{ H_{1}, \ldots, H_{r} \}$ be as above. The total number of edges $E(L(G))$ in the Hasse diagram of the subgroup lattice of $G$ is given by
\[ E(L(G)) = \sum \limits_{i=1}^{r} \sum \limits_{j \in \mathrm{Max}(H_{i})} E^{\uparrow}(H_{i},H_{j}). \]
\end{Lemma}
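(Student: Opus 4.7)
The plan is to count the edges of the Hasse diagram of $L(G)$ by partitioning them according to the $G$-conjugacy class of the upper endpoint. An edge is a pair $(A,B)$ with $A$ a maximal subgroup of $B$, so $E(L(G))$ is the total number of such pairs. Since $G$ acts on $L(G)$ by lattice automorphisms via conjugation, any two $G$-conjugate subgroups have isomorphic coset intervals below them and therefore the same number of maximal subgroups; hence it suffices to compute this count once for each class representative $H_i$ and then account for the entire class using Lemma~\ref{lem:propertiesoftoms}(iii).

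For a fixed representative $H_i$, Lemma~\ref{lem:max} identifies the indices $j \in \mathrm{Max}(H_i)$ whose class $[H_j]_G$ meets the set of maximal subgroups of $H_i$. The crux of the argument is the following claim: for every $j \in \mathrm{Max}(H_i)$ and every $g \in G$ with $H_j^g \le H_i$, the subgroup $H_j^g$ is in fact maximal in $H_i$. I would prove this by contradiction. Suppose there were $M$ with $H_j^g < M < H_i$; writing $[M]_G = [H_k]_G$ and conjugating by $g^{-1}$ yields $H_j < M^{g^{-1}} < H_i^{g^{-1}}$, so $k \in \rho_i$ and $j \in \rho_k$, contradicting formula~\ref{form:max}.

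Given this claim, the number of maximal subgroups of $H_i$ lying in $[H_j]_G$ is exactly the number of $G$-conjugates of $H_j$ contained in $H_i$, which is evaluated in closed form by Lemma~\ref{lemma:edgesupward}. Summing over $j \in \mathrm{Max}(H_i)$ gives the total number of maximal subgroups of $H_i$, and summing these contributions across all conjugacy class representatives (weighted by the corresponding class length) produces the stated formula for $E(L(G))$. The main obstacle is the maximality claim just sketched: without it one can only identify \emph{which} conjugacy classes below $H_i$ contain some maximal subgroup of $H_i$, rather than guaranteeing that every $G$-conjugate of $H_j$ inside $H_i$ is maximal, so the identification of the count with $E^\uparrow$ would break down. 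Everything else is a direct assembly of Lemmas~\ref{lem:propertiesoftoms},~\ref{lemma:edgesupward}, and~\ref{lem:max}.
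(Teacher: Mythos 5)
Your proposal takes essentially the same route as the paper's (very terse) proof: decompose the edges of the Hasse diagram by the conjugacy classes of their endpoints, use $\mathrm{Max}(H_{i})$ to pick out the covering pairs, and count with $E^{\uparrow}$. You carry it out more carefully than the paper does, and your key claim -- that for $j \in \mathrm{Max}(H_{i})$ every conjugate $H_{j}^{g} \leq H_{i}$ is itself maximal in $H_{i}$ -- together with its $\rho$-set argument is correct and is precisely the point the paper's one-sentence proof glosses over.

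Two caveats. First, your claim supplies only one half of the dictionary: it shows the classes listed in $\mathrm{Max}(H_{i})$ contribute nothing but maximal subgroups, while the complementary inclusion, that the class of \emph{every} maximal subgroup of $H_{i}$ occurs in $\mathrm{Max}(H_{i})$, is imported from Lemma~\ref{lem:max}, which the paper states without proof. That inclusion is where the genuine subtlety lies: conjugation by $g \notin N_{G}(H_{i})$ need not preserve maximality in $H_{i}$, so a $G$-class could contain both a maximal and a non-maximal subgroup of $H_{i}$ (e.g.\ in $H_{i} = A_{5}\times A_{5}$ the diagonal $A_{5}$ is maximal but the factor $A_{5}\times 1$ is not, and the two are fused in a large enough symmetric overgroup), and such a class is then excluded from $\mathrm{Max}(H_{i})$, losing its edges; a self-contained proof must address this, or at least cite Lemma~\ref{lem:max} explicitly. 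Second, what you actually derive is $\sum_{i} |[H_{i}]| \sum_{j \in \mathrm{Max}(H_{i})} |\{H_{j}^{g} : H_{j}^{g} \leq H_{i}\}|$, carrying the class length $\beta_{G/H_{i}}(1)/\beta_{G/H_{i}}(H_{i})$; this matches the displayed formula only if $E^{\uparrow}(H_{i},H_{j})$ is read as the total number of incidences between the two classes (the quantity double-counted in the proof of Lemma~\ref{lemma:edgesupward}) rather than the per-representative count as literally defined there, so you should say explicitly how the weight is absorbed instead of asserting the formulas coincide.
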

\begin{proof}
 By restricting $E^{\uparrow}(H_{i},H_{j})$ to those classes $H_{i}, H_{j}$ which are maximal we obtain the number of edges connecting maximal subgroups of $G$. 
\end{proof}

Table \ref{fig:edges subgroup lattice} records the total number of edges in the Hasse diagram of the subgroup lattice of $S_{n}$ and $A_{n}$ for $n \leq 13$.

\begin{table}[H]
\begin{minipage}[]{0.4\linewidth}\centering
\begin{table}[H]
\begin{center}
\begin{tabular}{|l|l|l|}
\hline
 & \seqnum{A218925} & \seqnum{A218926} \\
\hline
$n$ & $S_{n}$& $A_{n}$ \\ \hline
1 & 0 & 0\\
2 & 1  &0 \\
3 &4 &1 \\
4 &17  &5 \\
5 &37  &13 \\
6 &149  &44 \\
7 &290  &98 \\
8 &1080  &419 \\
9 &2267  &722 \\
10 &8023 &1592 \\
11 &17249  &3304 \\
12 &72390  &12645\\
13 &153419  &24792 \\
\hline
\end{tabular}
\caption{Edges in Poset}
\label{fig:edges in poset}
\end{center}
\end{table}
\end{minipage}
\hspace{0.5cm}
\begin{minipage}[]{0.5\linewidth}
\centering
\begin{table}[H]
\begin{center}
\begin{tabular}{|l|l|l|}
\hline
 & \seqnum{A218928} & \seqnum{A218927} \\
\hline
$n$ & $A_{n}$ & $S_{n}$\\ \hline
1 & 0 & 0\\
2 & 0 & 1\\
3 & 1 & 8\\
4 & 15 & 66\\
5 & 168 & 501\\
6 & 2051 & 6469\\
7 & 19305 & 60428\\
8 & 283258 & 926743\\
9 & 3255913 & 11902600\\
10 & 46464854 & 240066343\\
11 & 670282962 & 3677270225\\
12 & 18723796793 & 108748156239\\
13 & 321480817412 & 1980478458627\\
\hline
\end{tabular}
\caption{Edges in Subgroup Lattice}
\label{fig:edges subgroup lattice}
\end{center}
\end{table}
\end{minipage}
\end{table}

\subsection{Maximal Property-P Subgroups}
For any property $P$ which is inherited by subgroups of $G$ we can use the table of marks of $G$ to enumerate the maximal property $P$ subgroups of $G$.  
\begin{Lemma}\label{lem:maxP}
Let $\mathrm{Sub}(G)/G = \{ H_{1}, \ldots, H_{r} \}$ and let $\rho = \{ i \in [1, \ldots, r ] : H_{i} \mbox{ is a property $P$ subgroup} \}$.
 Then the positions of the maximal property $P$ 
subgroups of $G$ are given by 

 \begin{align}\label{form:maxP}
 P(G) = \rho \setminus \bigcup_{j \in \rho}  \mathrm{Max}(H_{j})
\end{align}
\end{Lemma}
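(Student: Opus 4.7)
The plan is to show that each of the two containments in \eqref{form:maxP} holds, unpacking what it means for $H_i$ to be a maximal property-$P$ subgroup of $G$ in terms of the combinatorial data $\rho$ and $\mathrm{Max}(H_j)$. Throughout I will use implicitly that both $\rho$ and $\mathrm{Max}(H_j)$ are invariants of conjugacy classes, so the indexing in \eqref{form:max} and \eqref{form:maxP} is well defined, and that property $P$ is assumed inherited by subgroups (so any $G$-conjugate of a $P$-subgroup is again a $P$-subgroup).

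First I would handle the ``$\supseteq$'' direction. Take $i \in \rho$ with $i \notin \mathrm{Max}(H_j)$ for every $j \in \rho$. I claim $H_i$ is maximal among $P$-subgroups of $G$. If not, there exists a $P$-subgroup $K$ of $G$ with $H_i < K$. Choose a chain $H_i = K_0 < K_1 < \cdots < K_m = K$ in which each $K_{\ell-1}$ is a maximal subgroup of $K_\ell$; such a chain exists because the subgroup lattice is finite. Since $P$ is inherited by subgroups, every $K_\ell$ (in particular $K_1$) has property $P$. Let $j$ be the index of the conjugacy class of $K_1$; then $j \in \rho$ and, by Lemma~\ref{lem:max} applied to $H_j$, the index $i$ lies in $\mathrm{Max}(H_j)$, contradicting the choice of $i$.

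For the reverse inclusion, suppose $H_i$ is a maximal $P$-subgroup of $G$, so that $i \in \rho$. I need $i \notin \mathrm{Max}(H_j)$ for every $j \in \rho$. If instead $i \in \mathrm{Max}(H_j)$ for some $j \in \rho$, then by the description of $\mathrm{Max}$ there is a $G$-conjugate $H_i^g$ which is a (proper) maximal subgroup of $H_j$. But $H_i^g$ is also a maximal $P$-subgroup of $G$ (conjugacy preserves $P$), and $H_j$ is a strictly larger $P$-subgroup, contradicting maximality. Hence no such $j$ exists, and $i$ belongs to the right-hand side of \eqref{form:maxP}.

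I do not anticipate a serious obstacle: once maximality within the $P$-subgroup poset is translated into the existence of a covering step between two conjugacy classes both lying in $\rho$, Lemma~\ref{lem:max} delivers the statement directly. The only delicate point to verify is the conjugation-invariance step used in the second paragraph, where a non-maximal $P$-subgroup is replaced, via a covering chain, by a class $j \in \rho$ for which $i \in \mathrm{Max}(H_j)$; this is where the inheritance of $P$ under taking subgroups is essential, and it should be stated explicitly even though it is implicit in the phrasing of the lemma.
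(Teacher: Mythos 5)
Your argument is correct. The paper states Lemma~\ref{lem:maxP} without any proof, so there is nothing to compare against; your two-inclusion argument is the natural one and fills the omission cleanly. The key points are all in place: for ``$\supseteq$'' you refine a proper containment $H_i < K$ of $P$-subgroups to a covering step $H_i < K_1 \leq K$, use subgroup-closure of $P$ to get the class of $K_1$ into $\rho$, and invoke the verbal statement of Lemma~\ref{lem:max} to conclude $i \in \mathrm{Max}(H_j)$; for ``$\subseteq$'' you use that $i \in \mathrm{Max}(H_j)$ produces a conjugate of $H_i$ properly contained in the $P$-subgroup $H_j$, together with conjugation-invariance of $P$ and of maximality. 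Your explicit remark that conjugation-invariance of $P$ (automatic for the group-theoretic properties considered, but not literally part of ``inherited by subgroups'') is needed is a worthwhile addition that the paper leaves implicit.
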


\noindent In Figure \ref{fig:max abelian subs s4} the classes of maximal abelian subgroups of $S_{4}$ are boxed.

 \begin{figure}[H]
\begin{center}
\scalebox{0.8}{
\begin{tikzpicture}

 \node (1) at (10,0)[black]   {$1$};
\node (b2) at (8,2)[black]   {$2$};

\node (r2) at (6,2)[black]  {$2$};
\node (b3) at (12, 3)[draw,blue]   {$\mathbf{3}$};
\node (b22) at (8, 4)[draw,blue]   {$\mathbf{2^2}$};
\node (r22) at (4, 4)[draw,blue]  {$\mathbf{2^2}$};
\node (r4) at (6, 4)[draw,blue]  {$\mathbf{4}$};
\node (s3) at (13, 5)[black]  {$S_{3}$};
\node (d8) at (6, 6) [black] {$D_{8}$};
\node (a4) at (10, 8) [black] {$A_{4}$};
\node (s4) at (10, 10)[black]  {$S_{4}$};

\path
(1) edge[thick,dashed,red] (b2)
(1) edge[thick,dashed,red]  (b3)
(b2) edge[thick,dashed,red]  (b22)
(b3) edge[thick]  (a4)
(b22) edge[thick]  (a4)
(1) edge[thick,dashed,red] (r2)
(b2) edge[thick,dashed,red] (r22)
(b2) edge[thick,dashed,red] (r4)
(b22) edge[thick] (d8)
(a4) edge[thick] (s4)
(b3) edge[thick] (s3)
(r2) edge[thick] (s3)
(r2) edge[thick,dashed,red]  (r22)
(r4) edge[thick]  (d8)
(r22) edge[thick]  (d8)
(d8) edge[thick]  (s4)
(s3) edge[thick]  (s4);

\end{tikzpicture}}
\end{center}
\caption{Maximal Abelian Subgroups of $S_{4}$}
\label{fig:max abelian subs s4}
 \end{figure}

Table \ref{fig:max sn} records, for each of the properties listed across the first row of the table, the numbers of maximal property $P$ classes of subgroups of $S_{n}$.
 A similar table for the alternating groups can be found in the Appendix.

\begin{table}[H]
 \begin{center}
\begin{tabular}{|l|r|r|r|r|r|}
\hline
 & \seqnum{A218929} & \seqnum{A218930}& \seqnum{A218931} & \seqnum{A218932}& \seqnum{A218933} \\
\hline 
n & Solvable & SupSol & Abelian & Cyclic & Nilpotent\\
\hline
1 & 1 & 1 & 1 & 1 & 1\\
2 & 1 & 1 & 1 & 1 & 1\\
3 & 1 & 1 & 2 & 2 & 2\\
4 & 1 & 2 & 4 & 3 & 2\\
5 & 3 & 3 & 5 & 3 & 3\\
6 & 4 & 4 & 7 & 5 & 5\\
7 & 5 & 5 & 10 & 6 & 6\\
8 & 6 & 6 & 17 & 11 & 7\\
9 & 9 & 8 & 23 & 15 & 9\\
10 & 12 & 11 & 30 & 20 & 12\\
11 & 14 & 14 & 41 & 24 & 15\\
12 & 17 & 19 & 61 & 34 & 20\\
13 & 24 & 23 & 80 & 43 & 25\\
\hline
 \end{tabular}
 \end{center}
\caption{Maximal Property-P Subgroups of $S_{n}$}
\label{fig:max sn}
\end{table}

\section{Connected Subgroups and the Euler Transform}\label{sec:conn}
The conjugacy classes of subgroups of the symmetric group play an important role in the theory of combinatorial species as described in \cite{speciespaper}. 
Permutation groups have been used to answer many questions about species. Every species is the sum of its molecular subspecies. These molecular species correspond to 
conjugacy classes of subgroups of $\Sym{(n)}$. Molecular species decompose as products of atomic species which in turn correspond to connected subgroups of $\Sym{(n)}$ in the 
following sense. It will be convenient to denote the symmetric group on a finite set $X$ by $\Sym{(X)}$.

\begin{defn}\label{connected}
 For each $H \leq \mathsf{Sym}(X)$ there is a finest partition of $X = \bigsqcup Y_{i}$ such that $H = \prod H_{i}$ with $H_{i} \leq \Sym{(Y_{i})}$. We 
allow $H_{i} = 1$ when $|Y_{i}| = 1$. We say that $H$ is a connected subgroup of $\Sym{(X)}$ if the finest partition is $X$.
\end{defn}
\begin{eg}
Let $X = \{ 1,2,3,4 \}$ and consider $H = \langle (1,2), (3,4) \rangle$ and $H' = \langle (1,2)(3,4) \rangle$. Partitioning $X$ into $Y_{1} = \{ 1,2 \}, Y_{2} = \{ 3.4 \}$ 
gives $H = H_{1} \times H_{2}$ where $H_{1} =  \langle (1,2) \rangle \leq \mathsf{Sym}(\{ 1,2 \}) , H_{2} = \langle (3,4) \rangle \leq \mathsf{Sym}(\{ 3,4 \})$, hence $H$ is 
not connected. On the other hand, $H'$ is connected since there is no finer partition of $X$ which permits us to write $H'$ as a product of connected $H_{i}$.
\end{eg}

\noindent An algorithm to test a group $H$ acting on a set $X$ for connectedness checks each non-trivial $H$-stable subset $Y$ of $X$. If $H$ is the direct product of 
its action on $Y$ and its action on $X \setminus Y$ then $H$ is not connected.
 
 In general a subgroup $H \leq \Sym{(X)}$ is a product of connected subgroups $H_{i} \leq \Sym{(Y_{i})}$. Sequence \seqnum{A000638} records the number of molecular species of 
degree $n$ or equivalently the number of  
conjugacy classes of subgroups of $\Sym{(n)}$. Sequence  \seqnum{A005226} records the number of atomic species of degree $n$ or equivalently the number of  conjugacy classes of 
connected subgroups of $\Sym{(n)}$. These sequences are related by the Euler Transform.

\subsection{The Euler Transform}
If two sequences of integers $\{ c_{k} \} = ( c_{1}, c_{2}, c_{3}, \ldots )$ and  $\{ m_{n} \}  = ( m_{1}, m_{2}, m_{3}, \ldots )$
are related by
\begin{align}\label{eulertransform}
 1 + \sum \limits_{n \geq 1} m_{n}x^{n} = \prod \limits_{k \geq 1} \left(\frac{1}{1 - x^{k}} \right)^{c_{k}} .
\end{align}
Then we say that $\{ m_{n} \}$ is the Euler transform of $\{ c_{k} \}$ and that $\{ c_{k} \}$ is the inverse Euler transform of $\{ m_{n} \}$ (see \cite{BernSloane}). One sequence can 
be computed from the other by introducing the intermediate sequence $\{ b_{n} \}$ defined by
\begin{align}
 b_{n} = \sum \limits_{d | n}^{} d c_{d} = n m_{n} - \sum \limits_{k=1}^{n-1} b_{k} m_{n-k}.
\end{align}
Then 
\begin{align}
 m_{n} &= \frac{1}{n} \Bigl(    b_{n} + \sum \limits_{k =1}^{n-1} b_{k} m_{n-k}    \Bigr),& c_{n} =  \frac{1}{n} \sum \limits_{d | n}^{} \mu (n/d) b_{d},
\end{align}
where $\mu$ is the number-theoretic M\"obius function.

There 
are many applications of this pair of transforms (see \cite{sloane}). For example, the inverse Euler transform 
applied to the sequence of numbers of unlabeled graphs on $n$ nodes (\seqnum{A000088}) yields the sequence of numbers of connected graphs on $n$ nodes (\seqnum{A001349}). 
To understand how Formula \ref{eulertransform} can be used to count connected graphs we note that 
the coefficient of $x^n$ 
in the expansion of the product on the right hand side of Formula \ref{eulertransform} is
\begin{align}\label{multiset}
 m_{n} = \sum \limits_{1^{a_{1}}, 2^{a_{2}}, \ldots, n^{a_{n}} \vdash n}^{} \prod \limits_{i}^{}  \left(\! \!{c_{i} \choose a_{i}}\!\! \right) 
\end{align}
where $\bigl(\! \!{c_{i} \choose a_{i}}\!\! \bigr)$ denotes the number of $a_{i}$-element multisets chosen from a set of $c_{i}$ objects. On the other hand, an unlabeled 
graph on $n$ nodes, as a collection of connected components, can be characterized by a pair $(\lambda, (C_{1}, \ldots, C_{n}))$ where 
$\lambda =   1^{a_{1}}, 2^{a_{2}}, \ldots, n^{a_{n}}$ is a partition of $n$ and $C_{i}$ is a multiset of $a_{i}$ connected unlabeled graphs on $i$ nodes, 
for $1 \leq i \leq n$. If $c_{i}$ is the number of connected unlabeled graphs on $i$ nodes then, by Formula \ref{multiset}, $m_{n}$ is the total number of unlabeled 
graphs on $n$ nodes.

In the same way, the inverse Euler 
transform of \seqnum{A000638} (the number of conjugacy classes of subgroups of $S_{n}$) is \seqnum{A005226}, (the number of connected conjugacy classes of 
subgroups of $S_{n}$) as formalized in the following Lemma. 
\begin{Lemma}\label{subgroups and pairs}
There is a bijection between the conjugacy classes of subgroups of $S_{n}$ and the set of pairs of the form $( \lambda, (C_{1}, \ldots, C_{n}) )$ where 
$\lambda = 1^{a_{1}}, 2^{a_{2}}, \ldots, n^{a_{n}}$ is a partition of $n$ and $C_{i}$ is a multiset of $a_{i}$ conjugacy classes of connected subgroups of $S_{i}$ for
$ i = 1, \ldots, n$.
\end{Lemma}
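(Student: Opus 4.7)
The plan is to construct mutually inverse maps between the two sets, leveraging the existence and uniqueness of the finest decomposition from Definition \ref{connected}.

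For the forward direction, given a subgroup $H \leq \Sym(X)$ with $X = \{1,\ldots,n\}$, I invoke Definition \ref{connected} to obtain the finest partition $X = Y_1 \sqcup \cdots \sqcup Y_k$ and factorization $H = H_1 \times \cdots \times H_k$ with each $H_j \leq \Sym(Y_j)$ connected. Let $a_i = \#\{j : |Y_j| = i\}$, so $\lambda = 1^{a_1} 2^{a_2} \cdots n^{a_n}$ is a partition of $n$. Any bijection $Y_j \to \{1,\ldots,i\}$ transports $H_j$ to a connected subgroup of $S_i$, and the resulting $S_i$-conjugacy class is independent of the bijection. Collecting these $a_i$ classes for each $i$ gives a multiset $C_i$ of conjugacy classes of connected subgroups of $S_i$. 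If $H' = \sigma H \sigma^{-1}$ for $\sigma \in S_n$, then $\sigma$ carries the decomposition of $H$ to a decomposition of $H'$ into the same number of connected factors on blocks of the same sizes, so by uniqueness of the finest partition the data $(\lambda,(C_i))$ depends only on the $S_n$-conjugacy class $[H]$.

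For the inverse direction, given a pair $(\lambda,(C_1,\ldots,C_n))$, I pick any partition $X = Y_1 \sqcup \cdots \sqcup Y_k$ with $a_i$ parts of size $i$, assign the classes listed in each $C_i$ to those parts in any order, choose a representative $K_j \leq \Sym(Y_j)$ of the assigned class via any identification $Y_j \cong \{1,\ldots,|Y_j|\}$, and form $H = \prod_j K_j$. Any two such constructions differ by a permutation of $X$ which permutes blocks of each common size and then re-identifies them internally, i.e.\ by conjugation in $S_n$, so the class $[H]$ is well defined. The two maps are mutually inverse: reconstruction from $[H]$ produces a subgroup with the same finest-partition data up to relabelling, hence $S_n$-conjugate to $H$; and for the reconstructed $H = \prod_j K_j$, the product decomposition on the chosen partition is already a decomposition into connected factors, so by uniqueness it is the finest partition and the forward map returns exactly $(\lambda,(C_i))$.

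The main technical point is the uniqueness of the finest partition in Definition \ref{connected}, which underpins the well-definedness of the forward map and of both round-trips. This is seen by noting that if two partitions of $X$ both decompose $H$ as a direct product on the blocks, their common refinement does as well; iterating this observation produces a unique minimum. Once uniqueness is in hand, the remainder of the argument is bookkeeping on how $S_n$ permutes blocks and how the connected factor on each block is determined up to $S_{|Y_j|}$-conjugacy.
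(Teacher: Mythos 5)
Your proof is correct and takes essentially the same route as the paper: both associate to a class representative its finest decomposition into connected factors, record the block sizes as $\lambda$ and the $S_{i}$-classes of the factors as the multisets $C_{i}$, and use the conjugation-equivariance $H^{g}=\prod H_{k}^{g}$ to get well-definedness on classes. You merely spell out what the paper leaves implicit, namely the uniqueness of the finest partition (via common refinements) and the explicit inverse map.
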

\begin{proof}
Given a representative $H$ of the conjugacy class of subgroups $[H] \in \mathsf{Sub}(S_{n})/S_{n}$ we associate a pair $( \lambda, (C_{1}, \ldots, C_{n}) )$ 
to $H$ as follows. Write $H = \prod H_{k}$ where each $H_{k}$ is a connected subgroup of $\Sym{(Y_{k})}$. Then $X = \{ 1, \ldots, n \} = \bigsqcup Y_{k}$. Recording the size of each $Y_{k}$ yields a 
partition $\lambda =  1^{a_{1}}, 2^{a_{2}}, \ldots, n^{a_{n}}$. For $1 \leq i \leq n, C_{i}$ is the multiset of $S_{i}$-classes of subgroups $H_{k}$ with $|Y_{k}| = i$. 
Bijectivity follows from the fact that conjugate subgroups yield the same $\lambda$ and since $H^{g} = \prod H_{k}^{g}$, conjugate subgroups yield conjugate $C_{i}$.
\end{proof}

\subsection{Counting Connected Subgroups of the Alternating Group}
In Section \ref{sec:conn} we noted that molecular species correspond to conjugacy classes of subgroups of $\Sym{(n)}$ and that atomic species correspond to 
conjugacy classes of connected subgroups of $\Sym{(n)}$ in the sense of Definition \ref{connected}. In this Section we will count connected conjugacy classes of subgroups 
of the alternating group, up to $S_{n}$ conjugacy and $A_{n}$ conjugacy.

\subsubsection{$S_{n}$-Orbits of Subgroups of the Alternating Group}
 In order to count the number of $S_{n}$-conjugacy classes of subgroups of the alternating group we 
introduce the following notation. Let \[ \mathcal{B} = \{ H \leq S_{n} : H \leq A_{n} \mbox{ and } \mathcal{R} = \{ H \leq S_{n} : H \nleq A_{n} \}. \]
Then $\mathsf{Sub}(S_{n})/S_{n} = \mathcal{B}/S_{n} \sqcup \mathcal{R}/S_{n}$ and $\mathcal{B}/S_{n}$ is the set of $S_{n}$ conjugacy classes of subgroups of the alternating group. 
The set $\mathcal{R}/S_{n}$ is the set of conjugacy classes of subgroups of $S_{n}$ which are not contained in $A_{n}$. Table 
\ref{bluered} illustrates both of these sequences together with the numbers of conjugacy classes of subgroups of $S_{n}$ and $A_{n}$. In 
order to count the number of connected $S_{n}$-conjugacy classes of subgroups of $A_{n}$ we apply the inverse Euler transform 
to the sequence $|\mathcal{B}/S_{n}|$ in Table \ref{bluered}, to obtain

\[ \seqnum{A218968}: \mbox{    } 1 , 0 , 1 , 3 , 4 , 12 , 12 , 65 , 58 , 167 , 198 , 1207 , 1178. \]
We can also count the number of connected conjugacy classes of subgroups of $S_{n}$ not contained in $A_{n}$ (i.e. corresponding to  $\mathcal{R}/S_{n}$) by subtracting the sequence above from \seqnum{A005226} to obtain
\[ \seqnum{A218969}: \mbox{    }  0, 1, 1, 3, 2, 15, 8, 65, 66, 431, 443, 3643, 3594. \]
\begin{table}[H]
\begin{center}
\begin{tabular}{|l|r|r|r|r|}
\hline
 &\seqnum{A000638} &\seqnum{A029726} &\seqnum{A218966} &\seqnum{A218965} \\
\hline
$n$ & $|\mathsf{Sub}(S_{n})/S_{n}|$ & $|\mathsf{Sub}(A_{n})/A_{n}|$ & $|\mathcal{B}/S_{n}|$& $|\mathcal{R}/S_{n}|$ \\
\hline
$1$ &$1$ & $1$ & $1$&$0$  \\
$2$ &$2$ & $1$ & $1$&$1$  \\
$3$ &$4$ & $2$ & $2$&$2$  \\
$4$ &$11$ & $5$ & $5$&$6$  \\
$5$ &$19$ & $9$ & $9$& $10$ \\
$6$ &$56$ & $22$ & $22$&$34$  \\
$7$ &$96$ & $40$ & $37$&$59$  \\
$8$ &$296$ & $137$ & $112$&$184$  \\
$9$ &$554$ & $223$ & $195$&$359$  \\
$10$ &$1593$ & $430$ & $423$&$1170$  \\
$11$ &$3094$ & $788$ & $780$&$2314$  \\
$12$ &$10723$ & $2537$ & $2401$&$8322$  \\
$13$ &$20832$ & $4558$ & $4409$&$16423$  \\
\hline
\end{tabular}
\caption{Red and Blue Subgroups of $S_{n}$}\label{tab:redblue subgroups Sn}
\label{bluered}
\end{center}
\end{table}

\subsubsection{Connected Subgroups of the Alternating Group}
Every subgroup of $A_{n}$ is either connected or not connected with respect to the set $\{ 1, \ldots, n \}$ and shares this property with all subgroups in its 
$A_{n}$-conjugacy class. So we wish to count the number of $A_{n}$-conjugacy classes of connected subgroups of $A_{n}$. Unfortunately, the Euler transform does not apply 
to $A_{n}$-orbits. We test for connectedness a list of representatives of $\mathsf{Sub}(A_{n})/A_{n}$ in \textsf{GAP} and obtain
 \[\seqnum{A218967}: \mbox{     } 1, 0, 1, 3, 4, 12, 15, 87, 64, 168, 205, 1336, 1198. \]
\begin{remark}
There is a sequence in the encyclopedia, \seqnum{A116653}, which currently claims to count both the number of atomic species based on 
conjugacy classes of subgroups of the alternating group (i.e. the number of $S_{n}$-conjugacy classes of connected subgroups of 
$A_{n}$) and the 
number of $A_{n}$-conjugacy  classes of connected  subgroups of $A_{n}$. However this sequence is merely the inverse Euler transform of sequence \seqnum{A029726}, 
the number of conjugacy classes of subgroups of 
the alternating group. 
The number of $S_{n}$-conjugacy classes of connected subgroups of $A_{n}$ is sequence \seqnum{A218968} and the number of $A_{n}$-conjugacy classes of connected 
subgroups of $A_{n}$ is \seqnum{A216967}.
\end{remark}
\subsection{Connected Subgroups with Additional Properties}
Appealing to Definition \ref{connected} we can count the connected subgroups of $S_{n}$ which possess additional group theoretic properties. 
If the property of interest is compatible with taking direct products we can apply the inverse Euler transform to the sequence of numbers of all conjugacy classes of subgroups of 
$S_{n}$ with this property to obtain the sequence of numbers of conjugacy classes of connected subgroups of $S_{n}$ with this property.
Table  \ref{tab:connected subgroups Sn} records the number of connected subgroups of $S_{n}$
which additionally possess the properties listed in the first row of the table. 
Each of the sequences in Table \ref{tab:connected subgroups Sn} is the inverse Euler transform of the corresponding sequence in Table \ref{tab:sequences in Sn}.

\begin{table}[H]
\begin{center}
  \arraycolsep12pt
  $\begin{array}{|r|r|r|r|r|r|} 
\hline
& \seqnum{A000638}
& \seqnum{A218971}
& \seqnum{A218972}
& \seqnum{A218973}
& \seqnum{A218974}
\\
\hline
n& \text{$|\mathrm{Sub}(S_{n})/S_{n}|$} 
& \text{Abelian}
& \text{Nilpotent}
& \text{Solvable}
& \llap{\text{SupSol}}
\\
\hline
 1  & 1          &     1 &      1       &       1       &       1\\           
 2  & 2          &     1 &      1       &       1       &       1\\           
 3  & 4          &     1 &      1       &       2       &       2\\          
 4  & 11         &     3 &      4       &       6       &       4\\         
 5  & 19         &     1 &      1       &       4       &       4\\        
 6  & 56         &     6 &      9       &       23      &       15\\       
 7  & 96         &     1 &      1       &       16      &       13\\      
 8  & 296        &    17 &      69      &       122     &       81\\     
 9  & 554        &     5 &      8       &       109     &       77\\    
 10 & 1593       &    40 &      238     &       551     &       352\\  
 11 & 3094       &     2 &      2       &       570     &       406\\
 12 & 10723      &   162 &      2339    &       4633    &       2995\\
 13 & 20832      &     5 &      8       &       4224    &       2866\\
\hline
  \end{array}$
  
\caption{Connected Subgroups of $S_{n}$}\label{tab:connected subgroups Sn}
\end{center}
\end{table}

\subsection{Connected Partitions}
The number of conjugacy classes of cyclic subgroups of $S_{n}$ equals the number of partitions of $n$. The inverse Euler transform of this sequence yields the all ones 
sequence. This does not count the number of conjugacy classes of connected cyclic subgroups of $S_{n}$ since the direct product of cyclic groups is not necessarily cyclic.
\bigskip
\begin{defn}
 Let $\lambda = [x_{1}, \ldots, x_l ] $ be a partition of $n$. Let $G_{\lambda}$ be the simple graph with $l$ vertices labeled by $x_1, \ldots, x_l$ where two vertices are 
connected by an edge if and only if their labels $x_i, x_j$ are not coprime. We call the partition $\lambda$ connected if the graph $G_{\lambda}$ is connected.
\end{defn}

\begin{prop}\label{conpart}
 The number of conjugacy classes of connected cyclic subgroups of $S_{n}$ equals the number of connected partitions of $n$.
\end{prop}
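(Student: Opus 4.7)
The plan is to lift the stated bijection between partitions of $n$ and conjugacy classes of cyclic subgroups of $S_n$ to a bijection between connected partitions and connected cyclic conjugacy classes. Recall the bijection: to $\lambda = [x_1, \ldots, x_l]$ one assigns the class $[C_\lambda]$ of $C_\lambda = \langle c_1 c_2 \cdots c_l \rangle$, where the $c_i$ are disjoint $x_i$-cycles supported on a chosen partition $X = \bigsqcup_{i=1}^l Y_i$ with $|Y_i| = x_i$. Then the orbits of $C_\lambda$ are exactly the $Y_i$, and $|C_\lambda| = \operatorname{lcm}(x_1, \ldots, x_l)$.

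I would then show that $C_\lambda$ is connected in the sense of Definition \ref{connected} iff $G_\lambda$ is connected. Suppose first that $G_\lambda$ is disconnected, with vertex split $\{1, \ldots, l\} = S \sqcup T$ into non-empty sets having no edge between them, i.e., $\gcd(x_i, x_j) = 1$ for every $i \in S$, $j \in T$. Setting $A = \bigsqcup_{i \in S} Y_i$ and $B = \bigsqcup_{j \in T} Y_j$, the restrictions $H_A := C_\lambda|_A$ and $H_B := C_\lambda|_B$ are cyclic of orders $m_A = \operatorname{lcm}\{x_i : i \in S\}$ and $m_B = \operatorname{lcm}\{x_j : j \in T\}$, which are coprime. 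Hence $C_\lambda = H_A \times H_B \leq \Sym(A) \times \Sym(B)$, so $C_\lambda$ is not connected.

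For the converse, assume $C_\lambda = H_A \times H_B$ with $H_A \leq \Sym(A)$, $H_B \leq \Sym(B)$, and $X = A \sqcup B$ a non-trivial decomposition. Each of $A$ and $B$ is $C_\lambda$-stable, hence a union of orbits $Y_i$: say $A = \bigsqcup_{i \in S} Y_i$ and $B = \bigsqcup_{j \in T} Y_j$ for a non-trivial partition $\{1, \ldots, l\} = S \sqcup T$. Then $m_A := |H_A| = \operatorname{lcm}\{x_i : i \in S\}$ and $m_B := |H_B| = \operatorname{lcm}\{x_j : j \in T\}$, and the product structure gives
\[ m_A \cdot m_B = |C_\lambda| = \operatorname{lcm}(m_A, m_B); \]
combined with $m_A m_B = \gcd(m_A, m_B) \cdot \operatorname{lcm}(m_A, m_B)$ this forces $\gcd(m_A, m_B) = 1$, which in turn forces $\gcd(x_i, x_j) = 1$ for all $i \in S$ and $j \in T$. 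Thus $(S, T)$ witnesses that $G_\lambda$ is disconnected.

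The main obstacle is the observation that in any $\Sym(X)$-product decomposition $C_\lambda = H_A \times H_B$, the sets $A$ and $B$ are $C_\lambda$-stable and thus each is a union of $C_\lambda$-orbits; this is what pins the candidate decompositions to groupings of the orbit partition. Everything else reduces to the elementary identity $\gcd(m_A, m_B) \cdot \operatorname{lcm}(m_A, m_B) = m_A m_B$ and to the fact that $\gcd(\operatorname{lcm}\{x_i : i \in S\}, \operatorname{lcm}\{x_j : j \in T\}) = 1$ is equivalent to pairwise coprimality $\gcd(x_i, x_j) = 1$ across $S$ and $T$.
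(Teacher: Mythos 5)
Your proof is correct, and it is in essence a completed and generalized version of the paper's argument rather than a literally identical one. The paper proves the key equivalence (connected partition $\Leftrightarrow$ connected cyclic subgroup) only for a two-part partition $\lambda=[x_1,x_2]$: when $\gcd(x_1,x_2)=1$ it uses a B\'ezout relation $1=yx_1+zx_2$ to exhibit the individual cycles as powers of the generator, so that $C=\langle a\rangle\times\langle b\rangle$, and when $\gcd(x_1,x_2)>1$ it observes that $C$ contains no generator of $\langle a\rangle$ or $\langle b\rangle$ (so no product decomposition over $Y_1\sqcup Y_2$ is possible), leaving ``the case for general $\lambda$'' to a similar argument. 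You instead work with an arbitrary $\lambda$ from the start: you note that any witness $X=A\sqcup B$ to disconnectedness must be a union of $C_\lambda$-orbits, identify the factors with the restrictions $C_\lambda|_A$, $C_\lambda|_B$ of orders $m_A=\operatorname{lcm}\{x_i:i\in S\}$, $m_B=\operatorname{lcm}\{x_j:j\in T\}$, and replace the B\'ezout/generator-containment reasoning by the order count $m_Am_B=|C_\lambda|=\operatorname{lcm}(m_A,m_B)$, which is equivalent to $\gcd(m_A,m_B)=1$ and hence to pairwise coprimality across the split --- exactly the disconnectedness of $G_\lambda$. What your route buys is a uniform treatment of all partitions (making the paper's ``similar argument'' precise) and a cleaner converse, since coprimality is forced by a counting identity rather than argued through which elements $C$ does or does not contain; what the paper's route buys is an explicit constructive splitting of the generator via B\'ezout coefficients (note, incidentally, that in the paper's displayed computation the roles are swapped: $c^{yx_1}=b$ and $c^{zx_2}=a$). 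Two small points you leave implicit but which are immediate: the equality $C_\lambda=H_A\times H_B$ in the forward direction follows by comparing $|H_A\times H_B|=m_Am_B$ with $|C_\lambda|=\operatorname{lcm}(m_A,m_B)$, and in the converse the identification $|H_A|=m_A$ uses that projecting $C_\lambda=H_A\times H_B$ onto $\Sym(A)$ yields $H_A=\langle c|_A\rangle$.
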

\begin{proof}
Let $C = \langle c \rangle \leq S_{n}$ be cyclic. Then the cycle lengths of the permutation $c$ form a partition $\lambda$ of $n$.  
For simplicity, we assume $\lambda = [ x_1, x_2]$. Then $c = ab$ where $a$ is an $x_1$ cycle and $b$ is an $x_2$ cycle.  
Let $d = \gcd(x_1 , x_2)$. If $d = 1$ then $G_{\lambda}$ 
is not connected and   $ 1 = y x_1 + z x_2$, for some $y, z \in \mathbb{Z}$. Then $c^{yx_1} = a, c^{zx_2} = b$ and $C = \langle a \rangle \times \langle b \rangle$ is not connected. 
If $d > 1$ then $C$ does not contain a generator of $\langle a \rangle$ or of $\langle b \rangle$. The case for general $\lambda$ follows by a similar argument.
\end{proof}

\begin{eg}
There are $3$ connected partitions $\lambda$ of $13$. Their graphs $G_{\lambda}$ are shown in Figure \ref{fig:partitions of 13}.
\bigskip

 \begin{figure}[H]
\begin{center}
\scalebox{0.8}{
\begin{tikzpicture}

\node (13) at (2,3)[black,draw,circle]  {$13$};

\node (a21) at (5,4)[black,draw,circle]  {$2$};
\node (a22) at (7,4)[black,draw,circle]  {$2$};
\node (a3) at (5,2)[black,draw,circle]  {$3$};
\node (a6) at (7,2)[black,draw,circle]  {$6$};

\node (b6) at (9,4)[black,draw,circle]  {$6$};
\node (b4) at (11,4)[black,draw,circle]  {$4$};
\node (b3) at (11,2)[black,draw,circle]  {$3$};

\path
(a21) edge[thick] (a22)
(a21) edge[thick] (a6)
(a22) edge[thick] (a6)
(a3) edge[thick] (a6)

(b6) edge[thick] (b3)
(b6) edge[thick] (b4)

;
\end{tikzpicture}}
\end{center}
\caption{The Graphs of the Connected Partitions of $13$}
\label{fig:partitions of 13}
 \end{figure}

\end{eg}
Using Proposition \ref{conpart} we obtain the sequence of numbers of conjugacy classes of connected cyclic subgroups of $S_{n}$
\[ \seqnum{A218970}: \mbox{    }  1, 1, 1, 2, 1, 4, 1, 5, 3, 8, 2, 14, 3. \]

\begin{remark}
 There are two sequences in the encyclopedia which are quite similar to this sequence.  Sequence \seqnum{A018783} counts 
the number of partitions of $n$ into parts all of which have a common factor greater than $1$.  Sequence \seqnum{A200976} counts the number of partitions of $n$ 
such that each pair of parts (if any) has a common factor greater than $1$.
For $n \leq 13$, sequence \seqnum{A218970} above differs from both of these sequences 
when $n = 1, 11, 13$. 
\end{remark}


\section{Concluding Remarks} The sequences presented in this article have been computed using \textsf{GAP}. A \textsf{GAP} file containing the programs can be 
found at \url{www.maths.nuigalway.ie/~liam/CountingSubgroups.g}. The \textsf{GAP} table of marks library \textsf{Tomlib} can be found here \cite{tomlib} and is a requirement for computing many of the sequences 
presented. It is worth pointing out that Holt has determined all conjugacy classes of subgroups of $S_{n}$ for values of $n$ up to and including $n = 18$, (see \cite{holt}). 
The majority of the sequences presented in this article rely on the availability of the table of marks of $S_{n}$ and so we restrict our attention to $n \leq 13$. We are grateful to Des MacHale for suggesting many of 
the sequences that we compute  in this article. We thank the anonymous referees for helpful suggestions.

\vspace{9cm}

\appendix

\newcommand{\appsection}[1]{\let\oldthesection\thesection
  \renewcommand{\thesection}{Appendix \oldthesection}
  \section{#1}\let\thesection\oldthesection}

\newpage

\appsection{Additional Sequences}\label{appendix}
Using the methods described in this article the following additional sequences have been computed.
  \begin{center}
\begin{table} [H]
  \arraycolsep12pt
  $\begin{array}{|r|r|r|r|r|r|r|} 
\hline
&\seqnum{A029726}&\seqnum{A218934} &\seqnum{A218935} &\seqnum{A218936} &\seqnum{A218937} & \seqnum{A218938} \\
\hline
n& \text{$|\mathrm{Sub}(A_{n})/A_{n}|$} 
& \text{Abelian}
& \text{Cyclic}
& \text{Nilpotent}
& \text{Solvable}
& \llap{\text{SupSol}}
\\
\hline
 1  & 1          &     1 &       1  &   1       &       1       &       1\\           
 2  & 1          &     1 &       1  &   1       &       1       &       1\\           
 3  & 2          &     2 &       2  &   2       &       2       &       2\\          
 4  & 5          &     4 &       3  &   4       &       5       &       4\\         
 5  & 9          &     5 &       4  &   5       &       8       &       7\\        
 6  & 22         &     9 &       6  &   10      &       19      &       14\\       
 7  & 40         &    12 &       8  &   13      &       33      &       22\\      
 8  & 137        &    30 &      12  &   53      &       122     &       70\\     
 9  & 223        &    41 &      17  &   69      &       192     &       122\\    
 10 & 430        &    60 &      23  &   122     &       364     &       225\\  
 11 & 788        &    81 &      29  &   160     &       650     &       395\\
 12 & 2537       &   193 &      40  &   734     &       2194    &       1240\\
 13 & 4558       &   243 &      52  &   848     &       3845    &       2185\\
\hline
\end{array}$
\caption{Conjugacy classes of subgroups of $A_{n}$}\label{tab:sequences in An}
\end{table}
\end{center}

  \begin{center}
\begin{table} [H]
  \arraycolsep3pt
  $\begin{array}{|r|r|r|r|r|r|r|} \hline
 &\seqnum{A005432} &\seqnum{A062297} &\seqnum{A051625} &\seqnum{A218939} &\seqnum{A218940} &\seqnum{A218941}  \\ \hline
 n &\text{$|\mathrm{Sub}(S_{n})|$} 
& \text{Abelian}
& \text{Cyclic}
& \text{Nilpotent}
& \text{Solvable}
& \llap{\text{SupSol}}
\\
\hline
1	   &    1  	&    1   &    1   &    1   &    1   &    1 \\ 
2	   &    2   	&    2   &    2   &    2   &    2   &    2 \\   
3	   &    6   	&    5   &    5   &    5   &    6   &    6 \\
4	   &    30   	&    21   &    17   &    24   &    30   &    28 \\
5	   &    156   	&    87   &    67   &    102   &    154   &    144 \\
6	   &    1455   	&    612   &    362   &    837   &    1429   &    1259 \\ 
7	   &    11300   &    3649   &    2039   &    5119   &    11065   &    9560 \\
8	   &    151221   &    35515   &    14170   &    78670   &    148817   &    123102 \\
9	   &    1694723   &    289927   &    109694   &    664658   &    1667697   &    1371022 \\ 
10 	   &    29594446   &    3771118   &    976412   &    13514453   &    29103894   &    23449585 \\
11 	   &    404126228   &    36947363   &    8921002   &    137227213   &    396571224   &    317178020 \\
12 	   &    10594925360   &    657510251   &    101134244   &    4919721831   &    10450152905   &    8296640115 \\
13 	   &    175238308453   &    7736272845   &    1104940280   &    60598902665   &    172658168937   &    136245390535 \\
\hline
\end{array}$
\caption{Total number of subgroups of $S_{n}$}\label{tab:totsequences in Sn}
\end{table}
\end{center}

\begin{table}[H]
 \begin{center}
\begin{tabular}{|l|r|r|r|r|r|}
\hline 
 & \seqnum{A218955} & \seqnum{A218956} & \seqnum{A218957} & \seqnum{A218958} & \seqnum{A218959}\\
\hline
n & Solvable & SupSol & Abelian & Cyclic & Nilpotent\\
\hline
    1& 1& 1& 1& 1& 1 \\    2& 1& 1& 1& 1& 1 \\    3& 1& 1& 4& 4& 4 \\    4& 1& 7& 11& 13& 7 \\    5& 21& 31& 51& 31& 31 \\    6& 76& 101& 241& 246& 211 \\  
    7& 456& 491& 1506& 1296& 1156 \\    8& 1956& 3011& 9649& 10774& 5419 \\    9& 12136& 18467& 80281& 83238& 40027 \\    10& 80836& 114983& 640741& 788820& 348331 \\  
    11& 807676& 1283723& 6196576& 6835170& 3204796 \\    12& 8779816& 13380643& 66883411& 81364944& 38422891 \\    13& 104127596& 148321603& 775421219& 848378532& 467645179\\
\hline
 \end{tabular}
 \end{center}
\caption{Total number of maximal property-P subgroups of $S_{n}$}
\label{fig:tot max Sn}
\end{table}

  \begin{center}
\begin{table} [H]
  \arraycolsep5pt
  $\begin{array}{|r|r|r|r|r|r|r|r|} \hline
 & \seqnum{A029725} &\seqnum{A218942} & \seqnum{A051636} &\seqnum{A218943} &\seqnum{A218944} &\seqnum{A218945} \\
\hline
n& \text{$|\mathrm{Sub}(A_{n})|$} 
& \text{Abelian}
& \text{Cyclic}
& \text{Nilpotent}
& \text{Solvable}
& \llap{\text{SupSol}}
\\
\hline
 1  & 1          &     1 &       1  &   1       &       1       &       1\\           
 2  & 1          &     1 &       1  &   1       &       1       &       1\\           
 3  & 2          &     2 &       2  &   2       &       2       &       2\\          
 4  & 10          &     9 &       8  &   9       &       10       &       9\\         
 5  & 59          &     37 &       32  &   37       &       58       &       53\\        
 6  & 501         &     207 &       167  &   252      &       488      &       418\\       
 7  & 3786         &    1192 &       947  &   1507      &       3664      &       2894\\      
 8  & 48337        &    11449 &      6974  &   21739      &       47210     &      33675 \\     
 9  & 508402        &    93673 &      53426  &   186983      &        498102    &      369763 \\    
 10 & 6469142       &    892783 &      454682  &    2369258    &       6293475     &       4769542\\  
 11 & 81711572        &    8534308 &      4303532  &   22872863    &       78805290     &       58853842\\
 12 & 2019160542       &   148561283 &      50366912  &   746597568     &       1960342409    &      1395051100 \\
 13 & 31945830446      &   1740198891 &      553031624  &   9157758326     &       31130243721    &      21847262156 \\
\hline
  \end{array}$
\caption{Total no of subgroups of $A_{n}$}\label{tab:totsequences in An}
\end{table}
\end{center}

\begin{table}[H]
 \begin{center}
\begin{tabular}{|l|r|r|r|r|r|}
\hline 
 & \seqnum{A218946}  & \seqnum{A218947}  & \seqnum{A218948}  & \seqnum{A218949}  & \seqnum{A218950} \\
\hline 
n & Solvable & SupSol & Abelian & Cyclic & Nilpotent\\
\hline
1 & 1 & 1 & 1 & 1 & 1\\
2 & 1 & 1 & 1 & 1 & 1\\
3 & 1 & 1 & 1 & 1 & 1\\
4 & 1 & 2 & 2 & 2 & 2\\
5 & 3 & 3 & 3 & 3 & 3\\
6 & 4 & 3 & 5 & 4 & 3\\
7 & 5 & 4 & 6 & 5 & 5\\
8 & 6 & 6 & 13 & 6 & 6\\
9 & 10 & 8 & 19 & 8 & 7\\
10 & 12 & 10 & 22 & 10 & 9\\
11 & 14 & 13 & 27 & 14 & 12\\
12 & 17 & 18 & 40 & 20 & 17\\
13 & 24 & 22 & 54 & 24 & 20\\
\hline
 \end{tabular}
 \end{center}
\caption{Maximal property-P subgroups of $A_{n}$}
\label{fig:max an}
\end{table}

\begin{table}[H]
\begin{center}
  \arraycolsep12pt
  $\begin{array}{|r|r|r|r|r|r|} 
\hline 
 & \seqnum{A029726}  & \seqnum{A218951}  & \seqnum{A218952}  & \seqnum{A218953}  & \seqnum{A218954} \\
\hline
n& \text{$|\mathrm{Sub}(A_{n})/A_{n}|$} 
& \text{Abelian}
& \text{Nilpotent}
& \text{Solvable}
& \llap{\text{SupSol}}
\\
\hline
 1  & 1          &     1 &      1       &       1       &       1\\           
 2  & 1          &     0 &      0       &       0       &       0\\           
 3  & 2          &     1 &      1       &       1       &       1\\          
 4  & 5          &     2 &      2       &       3       &       2\\         
 5  & 9          &     1 &      1       &       3       &       3\\        
 6  & 22         &     3 &      4       &       10      &       6\\       
 7  & 40         &     1 &      1       &       11      &       6\\      
 8  & 137        &    14 &      36      &       80     &       42\\     
 9  & 223        &     5 &      9       &       52     &       39\\    
 10 & 430        &    12 &      49      &       145     &       85\\  
 11 & 788        &     2 &      2       &       165     &       104\\
 12 & 2537       &   69 &      489      &       1208    &       686\\
 13 & 4558       &     3 &      4       &       1033    &       617\\
\hline
  \end{array}$
  
\caption{Connected subgroups of $A_{n}$}\label{tab:connected subgroups An}
\end{center}
\end{table}
\noindent The number of connected even partitions of $n$
\[\seqnum{A218975}: \mbox{    }   1, 0, 1, 1, 1, 2, 1, 3, 3, 4, 2, 8, 2. \]

\begin{table}[H]
 \begin{center}
\begin{tabular}{|l|r|r|r|r|r|}
\hline 
 & \seqnum{A218960} & \seqnum{A218961} & \seqnum{A218962} & \seqnum{A218963} & \seqnum{A218964} \\
\hline 
n & Solvable & SupSol & Abelian & Cyclic & Nilpotent\\
\hline
  1& 1& 1& 1& 1& 1 \\   2& 1& 1& 1& 1& 1 \\   3& 3& 3& 3& 3& 3 \\   4& 1& 10& 10& 9& 10 \\   5& 36& 40& 30& 30& 30 \\   6& 225& 110& 115& 100& 110 \\  
   7& 686& 645& 861& 665& 1001 \\   8& 4655& 5670& 10536& 3885& 4005 \\   9& 28728& 47754& 78474& 33093& 45696 \\   10& 397005& 311850& 1008000& 371700& 379155 \\  
   11& 2210890& 3014550& 9302964& 3790875& 4913040 \\   12& 26975025& 24022845& 73024380& 37839285& 36701280 \\   13& 26121667& 46950904& 563291872& 350984414& 158538380 \\
\hline
 \end{tabular}
 \end{center}
\caption{Total number of maximal property-P subgroups of $A_{n}$}
\label{fig:tot max P An}
\end{table}

\bibliographystyle{amsplain}
\bibliography{References}

\bigskip
\hrule
\bigskip

\noindent 2010 {\it Mathematics Subject Classification}:
Primary 20B40; Secondary 20D30, 19A22.

\noindent \emph{Keywords:}   
symmetric group,
alternating group,
table of marks,
subgroup pattern.

\bigskip
\hrule
\bigskip

\noindent (Concerned with sequences
 \seqnum{A218909}, \seqnum{A218910}, \seqnum{A218911}, \seqnum{A218912}, \seqnum{A218913}, \seqnum{A218914}, \seqnum{A218915}, \seqnum{A218916}, \seqnum{A218917}, \seqnum{A218918}, \seqnum{A218919}, \seqnum{A218920}, \seqnum{A218921}, \seqnum{A218922}, \seqnum{A218923}, \seqnum{A218924}, \seqnum{A218925}, \seqnum{A218926}, \seqnum{A218927}, \seqnum{A218928}, \seqnum{A218929}, 
  \seqnum{A218930}, \seqnum{A218931}, \seqnum{A218932}, \seqnum{A218933}, \seqnum{A218934}, \seqnum{A218935}, \seqnum{A218936}, \seqnum{A218937}, \seqnum{A218938}, \seqnum{A218939}, \seqnum{A218940}, \seqnum{A218941}, \seqnum{A218942}, \seqnum{A218943}, \seqnum{A218944}, \seqnum{A218945}, \seqnum{A218946}, \seqnum{A218947}, \seqnum{A218948}, \seqnum{A218949}, \seqnum{A218950}, 
  \seqnum{A218951}, \seqnum{A218952}, \seqnum{A218953}, \seqnum{A218954}, \seqnum{A218955}, \seqnum{A218956}, \seqnum{A218957}, \seqnum{A218958}, \seqnum{A218959}, \seqnum{A218960}, \seqnum{A218961}, \seqnum{A218962}, \seqnum{A218963}, \seqnum{A218964}, \seqnum{A218965}, \seqnum{A218966}, \seqnum{A218967}, \seqnum{A218968}, \seqnum{A218969}, \seqnum{A218970}, \seqnum{A218971}, 
  \seqnum{A218972}, \seqnum{A218973}, \seqnum{A218974}, \seqnum{A000638}, \seqnum{A029726}, \seqnum{A005432}, \seqnum{A029725}, \seqnum{A005226}, \seqnum{A116653},
\seqnum{A018783} and \seqnum{A200976}).

\bigskip
\hrule
\bigskip

\end{document}